\newtheorem{lemma}{Lemma}
\newtheorem*{lemma-non}{Lemma}
\newtheorem{proposition}{Proposition}
\newtheorem{theorem}{Theorem}
\newtheorem*{theorem-non}{Theorem}
\newtheorem{corollary}{Corollary}
\newtheorem*{corollary-non}{Corollary}
\newtheorem{remark}{Remark}
\def\R{\mathbb R}
\def\N{\mathbb N}
\def\p{\partial}
\DeclareMathOperator{\supp}{supp}
\DeclareMathOperator*{\esssup}{ess\,sup}
\newcommand{\norm}[1]{\left\|#1 \right\|}
\let\div\relax
\DeclareMathOperator{\div}{div}
\def\grad{\nabla}
\newcommand{\jump}[1]{\llbracket #1 \rrbracket }
\date{\today}
\title[Stabilized FEM for ill-posed convection--diffusion problems. I]{A stabilized finite element method for inverse problems subject to the convection--diffusion equation. I: diffusion-dominated regime}
\author{Erik Burman}
\author{Mihai Nechita}
\author{Lauri Oksanen}
\address{Department of Mathematics, University College London, Gower Street, London UK, WC1E 6BT.}
\email{\{e.burman, mihai.nechita.16, l.oksanen\}@ucl.ac.uk}
\begin{document}

\begin{abstract}
The numerical approximation of an inverse problem subject to the convection--diffusion equation when diffusion dominates is studied.
We derive Carleman estimates that are of a form suitable for use in numerical analysis and 
with explicit dependence on the P\'eclet number. A stabilized finite element method is then proposed and analysed.
An upper bound on the condition number is first derived. Combining the stability estimates on the continuous problem
with the numerical stability of the method, we then obtain error estimates in local $H^1$- or $L^2$-norms that are optimal
with respect to the approximation order, the problem's stability and perturbations in data.
The convergence order is the same for both norms, but the $H^1$-estimate requires an additional divergence assumption for the convective field.
The theory is illustrated in some computational examples.
\end{abstract}	
	
\maketitle

\section{Introduction}\label{sec:intro}
We consider the convection--diffusion equation
\begin{equation}\label{eq:model_problem}
\mathcal{L} u := - \mu \Delta u + \beta \cdot \nabla  u = f \quad \text{in } \Omega,
\end{equation}
where $\Omega \subset \R^n$ is open, bounded and connected, $\mu>0$ is the diffusion coefficient and $\beta\in [W^{1,\infty}(\Omega)]^n$ is the convective velocity field. We assume that no information is given on the boundary $\partial \Omega$ and that there exists a solution $u\in H^2(\Omega)$ satisfying \eqref{eq:model_problem}. For an open and connected subset $\omega \subset \Omega$, define the perturbed restriction $\tilde U_\omega := u\vert_{\omega} + \delta$, where $\delta \in L^2(\omega)$ is an unknown function modelling measurement noise.
The data assimilation (or unique continuation) problem consists in finding $u$ given $f$ and $\tilde U_\omega$. Here the coefficients $\mu$ and $\beta$, and the source term $f$ are assumed to be known. This linear problem is ill-posed and it is closely related to the elliptic Cauchy problem, see e.g. \cite{ARRV09}. Potential applications include for example flow problems for which full boundary data are not accessible, but where local measurements (in a subset of the domain or on a part of the boundary) can be obtained. 

The aim is to design a finite element method for data assimilation with weakly consistent
regularization applied to the convection--diffusion equation \eqref{eq:model_problem}. In the present analysis we consider the regime where diffusion dominates and in the companion paper \cite{BNO19b} we treat the one with dominating convective transport. To make this more precise we introduce the P\'eclet number associated to a given length scale
$l$ by
$$
Pe(l) := \frac{|\beta| l}{\mu},
$$
for a suitable norm $|\cdot|$ for $\beta$.
If $h$ denotes the characteristic length scale of the computation, we
define the diffusive regime by
$Pe(h)<1$  and the convective regime by $Pe(h)>1$.
It is known that the character of the system changes drastically in the two regimes and we therefore need to apply different concepts of stability in the two cases.
In the present paper we assume that the P\'eclet number is small and we use an approach similar to that employed for the Laplace equation in \cite{Bur14b}, for the Helmholtz equation in \cite{BNO19} and for the heat equation in \cite{BO18}, that is we combine conditional stability estimates for the physical problem with optimal numerical stability obtained using a bespoke
weakly consistent stabilizing term.
For high P\'eclet numbers on the other hand, we prove in \cite{BNO19b} weighted estimates directly on the discrete solution, that reflect the anisotropic character of the
convection--diffusion problem.

In the case of optimal control problems subject to convection-diffusion problems that are well-posed, there are several works in the literature on stabilized finite element methods. In \cite{DQ05} the authors considered stabilization 
using a Galerkin least squares approach in the Lagrangian. Symmetric stabilization in the form of local projection stabilization was proposed in \cite{BV07} and using penalty on the gradient jumps
in \cite{YZ09,HYZ09}. The key difference between the well-posed case and the ill-posed case that we consider herein is that we can not use stability of neither the forward nor the backward equations. Crucial instead is the
convergence of the weakly consistent stabilizing terms and the matching of the quantities in the discrete method and the available (best) stability of the continuous problem. Such considerations lead to results
both in the case of high and low P\'eclet numbers, but the different stability properties in the two regimes lead to a different analysis for each case that will be considered in the two parts of this paper.

The main results of this current work are the convergence estimates with explicit dependence on the P\'eclet number in \cref{L^2-error} and \cref{H^1-error}, that rely on the continuous three-ball inequalities in \cref{lem:shifted3b} and \cref{cor:3_balls_impr}.   

\section{Stability estimates}\label{sec:stability_estimates}
We prove conditional stability estimates for the unique continuation problem subject to the convection--diffusion equation \eqref{eq:model_problem} in the form of three-ball inequalities, see e.g. \cite{MV12} and the references therein. The novelty here is that we keep track of explicit dependence on the diffusion coefficient $\mu$ and the convective vector field $\beta$. The first such inequality is proven in \cref{cor:3_balls}, followed by \cref{lem:shifted3b} and \cref{cor:3_balls_impr}, where the norms for measuring the size of the data are weakened to serve the purpose of devising a finite element method in \cref{sec:FEM}.

First we prove an auxiliary logarithmic convexity inequality, which is a more explicit version of \cite[Lemma 5.2]{LeRL12}.
\begin{lemma}\label{lem:log-convexity}
	Suppose that $a, b, c \ge 0$ and $p, q > 0$ satisfy
	$c \le b$ and 
	$c \le e^{p\lambda} a + e^{-q\lambda} b$
	for all $\lambda > \lambda_0 \ge 0$.
	Then there are $C > 0$ and $\kappa \in (0,1)$ (depending only on $p$ and $q$) 
	such that $$c \le C e^{q \lambda_0} a^\kappa b^{1-\kappa}.$$
\end{lemma}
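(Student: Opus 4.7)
The plan is to optimize the hypothesis $c \le e^{p\lambda}a + e^{-q\lambda}b$ in $\lambda$. Viewing the right-hand side as a convex function of $\lambda$, it is minimized when the two exponential terms balance, i.e. at $\lambda^{*} = \frac{1}{p+q}\log(b/a)$, where the minimal value is $2 a^{q/(p+q)} b^{p/(p+q)}$. This dictates the choice $\kappa := q/(p+q) \in (0,1)$, giving a target bound of the form $c \le 2\, a^{\kappa} b^{1-\kappa}$ whenever $\lambda^{*}$ is in the admissible range.

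After disposing of the trivial edge cases ($a=0$ forces $c=0$ by letting $\lambda\to\infty$; $b=0$ forces $c=0$ via $c\le b$), I would assume $a,b>0$ and split into two cases according to whether the optimum $\lambda^{*}$ lies in the admissible range $\lambda>\lambda_0$ or not.

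In the first case, $\lambda^{*} > \lambda_0$, I apply the hypothesis for $\lambda$ just above $\lambda^{*}$ and pass to the limit by continuity of the right-hand side, obtaining
\[
c \le e^{p\lambda^{*}} a + e^{-q\lambda^{*}} b = 2\, a^{\kappa} b^{1-\kappa}.
\]
Since $\lambda_0 \ge 0$ implies $e^{q\lambda_0}\ge 1$, this already gives $c \le 2\, e^{q\lambda_0}\, a^{\kappa} b^{1-\kappa}$.

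In the second case, $\lambda^{*}\le \lambda_0$, the balancing value is infeasible, so I fall back on the a priori inequality $c\le b$. The case hypothesis rearranges to $b\le a\,e^{(p+q)\lambda_0}$, and I write
\[
c \le b = b^{\kappa} b^{1-\kappa} \le \bigl(a\,e^{(p+q)\lambda_0}\bigr)^{\kappa} b^{1-\kappa} = e^{q\lambda_0} a^{\kappa} b^{1-\kappa},
\]
using the identity $\kappa(p+q)=q$. Combining the two cases yields the conclusion with $C=2$ and $\kappa = q/(p+q)$, both depending only on $p$ and $q$. There is no real obstacle here; the only subtle point is the appearance of the prefactor $e^{q\lambda_0}$, which is exactly the price paid when the natural optimum $\lambda^{*}$ is cut off by the constraint $\lambda>\lambda_0$, and the choice of $\kappa$ is dictated precisely so that this prefactor matches across the two cases.
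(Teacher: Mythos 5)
Your proof is correct and takes essentially the same route as the paper: the same exponent $\kappa = q/(p+q)$, the same two-case split on whether the optimal $\lambda$ is admissible, and the same fallback $c \le b$ combined with $b \le a\,e^{(p+q)\lambda_0}$ in the cut-off case. The only (cosmetic) difference is that the paper evaluates at the true minimizer $\lambda_* = \frac{1}{p+q}\log\frac{qb}{pa}$ while you evaluate at the balancing point $\frac{1}{p+q}\log(b/a)$ --- which, contrary to your phrasing, is not where $f$ is minimized, but is still an admissible evaluation point and yields the cleaner constant $C=2$ in place of the paper's $r^{p/(p+q)}+r^{-q/(p+q)}$.
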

\begin{proof}
	We may assume that $a, b > 0$, since $c = 0$ if $a = 0$ or $b = 0$.
	The minimizer $\lambda_*$ 
	of the function $f(\lambda) = e^{p\lambda} a + e^{-q\lambda} b$ is given by
	$$
	\lambda_* = \frac{1}{p+q} \log \frac{qb}{pa},
	$$
	and writing $r = q/p$, the minimum value is
	$$
	f(\lambda_*)
	= a\left(\frac{qb}{pa}\right)^{p/(p+q)}  
	+ b\left(\frac{qb}{pa}\right)^{-q/(p+q)} 
	= \left(r^{p/(p+q)} + r^{-q/(p+q)} \right)a^{q/(p+q)} b^{p/(p+q)}.
	$$
	This shows that if $\lambda_* > \lambda_0$ then
	$$
	c \le C_1 a^\kappa b^{1-\kappa},
	$$
	where $\kappa = q/(p+q)$ and $C_1 = r^{p/(p+q)} + r^{-q/(p+q)}$.
	On the other hand, if $\lambda_* \le \lambda_0$
	then it holds that 
	$e^{-q \lambda_0} \le e^{-q \lambda_*} = a^{q/(p+q)} (r b)^{-q/(p+q)}$, or equivalently,
	$$
	b^{q/(p+q)} \le e^{q \lambda_0} a^{q/(p+q)} r^{-q/(p+q)}.
	$$
	Therefore
	$$
	c \le b = b^{q/(p+q)} b^{p/(p+q)} \le e^{q \lambda_0} r^{-q/(p+q)} a^{q/(p+q)} b^{p/(p+q)}.
	$$
	That is, if $\lambda_* \le \lambda_0$ then
	$$
	c \le C_2 e^{q \lambda_0} a^\kappa b^{1-\kappa},
	$$
	where $C_2 = r^{-q/(p+q)}$.
	As $e^{q \lambda_0} \ge 1$ and $C_1 > C_2$, 
	the claim follows by taking $C = C_1$.
\end{proof}

The following Carleman inequality is well-known, see e.g. \cite{LeRL12}. For the convenience of the reader we have included an elementary proof in \cref{appendix}.
\begin{proposition}
	\label{prop:carleman}
	Let $\rho \in C^3(\Omega)$ and $K \subset \Omega$ be a compact set that does not contain critical points of $\rho$.
	Let $\alpha,\tau>0$ and $\phi = e^{\alpha \rho}$. Let $w \in C^2_0(K)$ and $v = e^{\tau \phi} w$. Then there is $C>0$ such that
	$$
	\int_K e^{2\tau\phi} (\tau^3 w^2 + \tau |\nabla w|^2) \,\mathrm{d}x   
	\le C \int_K e^{2 \tau\phi} |\Delta w|^2 \,\mathrm{d}x,
	$$
	for $\alpha$ large enough and $\tau \ge \tau_0$, where $\tau_0 > 1$ depends only on $\alpha$ and $\rho$.
\end{proposition}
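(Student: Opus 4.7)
The plan is to run the classical positive‐commutator (or Hörmander) argument for the conjugated operator. Write $P = -\Delta$; the aim is to estimate $w$ by $Pw$ in the weighted $L^2$ norm with weight $e^{2\tau\phi}$, which via the substitution $v = e^{\tau\phi} w$ becomes an unweighted estimate for $v$ in terms of the conjugated operator $L_\phi v := e^{\tau\phi} P(e^{-\tau\phi} v)$. A direct computation gives
\[
L_\phi v = -\Delta v + 2\tau\,\nabla\phi\cdot\nabla v - \tau^2|\nabla\phi|^2 v + \tau(\Delta\phi)\, v .
\]

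Next I would split $L_\phi$ into its symmetric and antisymmetric parts in $L^2(K)$,
\[
A v = -\Delta v - \tau^2 |\nabla\phi|^2 v, \qquad S v = 2\tau\,\nabla\phi\cdot\nabla v + \tau(\Delta\phi)\, v,
\]
noting that $v \in C^2_0(K)$ so all boundary terms in the forthcoming integrations by parts vanish. A short calculation using $\nabla\cdot(2\tau\nabla\phi) = 2\tau\Delta\phi$ shows $A^* = A$ and $S^* = -S$. Consequently
\[
\|L_\phi v\|_{L^2(K)}^2 = \|A v\|^2 + \|S v\|^2 + ([A,S]v,v),
\]
and it suffices to bound $([A,S]v,v)$ from below by a positive multiple of $\int_K (\tau^3 v^2 + \tau|\nabla v|^2)\,dx$.

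The commutator computation is the core of the argument. Writing $\phi = e^{\alpha\rho}$ gives $\nabla\phi = \alpha\phi\nabla\rho$ and $\phi_{ij} = \alpha\phi(\rho_{ij}+\alpha\rho_i\rho_j)$. Expanding $[A,S] = [-\Delta, S] + [-\tau^2|\nabla\phi|^2, S]$ yields, after some integration by parts, a quadratic form in $v$ and $\nabla v$ whose principal parts are
\[
4\tau \int_K \phi_{ij} v_i v_j\,dx \quad \text{and}\quad 4\tau^3 \int_K \phi^2\bigl(\alpha^3 |\nabla\rho|^4 + O(\alpha^2)\bigr) v^2\,dx,
\]
plus lower order terms that are controlled by powers of $\alpha$ independent of $\tau$ and by derivatives of $\rho$ that are bounded on the compact set $K$. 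Because $K$ contains no critical points of $\rho$, the quantity $|\nabla\rho|$ has a uniform positive lower bound on $K$; thus for $\alpha$ large enough the $\alpha^3|\nabla\rho|^4$ term dominates the Hessian contribution $\rho_{ij}$ (which can have either sign), and similarly in the gradient term $\alpha^2|\nabla\rho|^2$ beats the $\rho_{ij}$ cross terms. This produces the pointwise lower bound
\[
([A,S]v,v) \ge c(\alpha,\rho)\int_K \bigl(\tau^3 v^2 + \tau|\nabla v|^2\bigr)\,dx - C\tau \|v\|_{L^2}^2,
\]
and the remainder $C\tau\|v\|_{L^2}^2$ is absorbed into the leading $\tau^3$ term provided $\tau \ge \tau_0$ for some $\tau_0 = \tau_0(\alpha,\rho)$.

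The final step is to translate the estimate back to $w$. From $v = e^{\tau\phi}w$ one has $\nabla v = e^{\tau\phi}(\nabla w + \tau w\nabla\phi)$, so $|\nabla v|^2 \le 2e^{2\tau\phi}|\nabla w|^2 + 2\tau^2 e^{2\tau\phi}|\nabla\phi|^2 w^2$; the extra $\tau^2 w^2$ contribution is harmless because it is of lower order than the available $\tau^3 v^2 = \tau^3 e^{2\tau\phi} w^2$ term and can be absorbed for $\tau$ large. Conversely $|\nabla w|^2 \lesssim e^{-2\tau\phi}|\nabla v|^2 + \tau^2|\nabla\phi|^2 w^2$, which after absorption gives the claimed inequality. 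The main obstacle is the positive‐commutator calculation in the previous paragraph: one has to keep track of the $\alpha$‐ and $\tau$‐powers carefully in order to verify that the "good" $\alpha^3$ rank-one contribution $\rho_i\rho_j$ genuinely dominates the indefinite $\rho_{ij}$ terms uniformly on $K$, which is precisely where the no‐critical-points assumption is used.
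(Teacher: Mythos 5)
Your overall strategy---conjugate, split $L_\phi$ into symmetric and antisymmetric parts, and expand $\|L_\phi v\|^2=\|Av\|^2+\|Sv\|^2+([A,S]v,v)$---is the standard positive-commutator route, and at the level of the underlying algebra it is the same computation as the paper's \cref{lem:carleman_eq} (which is a pointwise version of this expansion carrying an extra free function $\sigma$). Your treatment of the zeroth-order part is fine: the coefficient of $\tau^3 v^2$ in the commutator is the scalar $4\,D^2\phi(\nabla\phi,\nabla\phi)=4\alpha^3\phi^3\bigl(\alpha|\nabla\rho|^4+D^2\rho(\nabla\rho,\nabla\rho)\bigr)$, and since $|\nabla\rho|$ is bounded below on $K$ (no critical points) the $\alpha^4$ term dominates for $\alpha$ large. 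The translation back to $w$ at the end is also handled correctly.

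The gap is in the $\tau|\nabla v|^2$ part of your claimed lower bound on $([A,S]v,v)$. The gradient contribution of the commutator is $4\tau\int_K D^2\phi(\nabla v,\nabla v)\,\mathrm{d}x$ with $D^2\phi=\alpha\phi\,(\alpha\,\nabla\rho\otimes\nabla\rho+D^2\rho)$. The ``good'' piece $\alpha^2\phi\,(\nabla\rho\cdot\nabla v)^2$ is rank one: it vanishes whenever $\nabla v\perp\nabla\rho$, so no choice of $\alpha$ makes $D^2\phi$ positive definite; on $\nabla\rho^{\perp}$ the form reduces to $\alpha\phi\,D^2\rho(\nabla v,\nabla v)$, which is genuinely negative for the weights used in this paper (e.g.\ $\rho=-d(x,x_0)$ has $D^2\rho\le 0$ on $\nabla\rho^{\perp}$). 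Hence your assertion that ``$\alpha^2|\nabla\rho|^2$ beats the $\rho_{ij}$ cross terms'' is false, and the claimed bound $([A,S]v,v)\ge c\int_K(\tau^3v^2+\tau|\nabla v|^2)\,\mathrm{d}x-C\tau\|v\|^2$ does not hold. The missing idea is to manufacture the positive $\tau|\nabla v|^2$ term from elsewhere: either borrow from $\|Av\|^2$ together with the $\tau^3v^2$ positivity, using $\tau\int\mu|\nabla v|^2\,\mathrm{d}x=\tau(Av,\mu v)+\tau^3\int\mu|\nabla\phi|^2v^2\,\mathrm{d}x-\tau\int(\nabla\mu\cdot\nabla v)v\,\mathrm{d}x$ for a suitable weight $\mu$, or, as the paper does, shift a symmetric multiple of $v$ into the antisymmetric factor---this is exactly the role of the choice $\sigma=\Delta\ell+3\alpha\lambda\phi\tau$ (i.e.\ $a=3\alpha\lambda\phi\tau$) in \cref{appendix}, which yields $a|\nabla v|^2+2D^2\ell(\nabla v,\nabla v)\ge \alpha\lambda\phi\tau|\nabla v|^2$ at the price of a controllable loss $-a|\nabla\ell|^2$ in the $\tau^3v^2$ coefficient, absorbed again by taking $\alpha$ large. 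Without one of these devices the proof does not close. (A minor further point: since $\rho\in C^3$ only, avoid integrating by parts into $\Delta^2\phi$; keep the corresponding remainder in the form $(\nabla\Delta\phi\cdot\nabla v)v$ and estimate it by Cauchy--Schwarz, as the paper does with its term $R$.)
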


Using the above Carleman estimate we prove a three-ball inequality that is explicit with respect to $\mu$ and $\beta$, i.e. the constants in the inequality are independent of the P\'eclet number. The corresponding inequality with constant depending implicitly on the P\'eclet number is proven for instance in \cite{MV12}. We denote by $B(x,r)$ the open ball of radius $r$ centred at $x$, and by $d(x,\partial \Omega)$ the distance from $x$ to the boundary of $\Omega$.
\begin{corollary}
	\label{cor:3_balls}
	Let $x_0 \in \Omega$ and $0 < r_1 < r_2 < d(x_0,\partial \Omega)$.
	Define $B_j = B(x_0, r_j)$, $j=1,2$.
	Then there are $C > 0$ and $\kappa \in (0,1)$ such that for $\mu>0$, $\beta\in [L^\infty(\Omega)]^n$ and $u \in H^2(\Omega)$ it holds that
	$$
	\norm{u}_{H^1(B_2)} \le C e^{C \tilde{Pe}^2} \left(\norm{u}_{H^1(B_1)} + \frac{1}{\mu} \norm{\mathcal{L} u}_{L^2(\Omega)}\right)^\kappa \norm{u}_{H^1(\Omega)}^{1-\kappa},
	$$
	where $\tilde{Pe} = 1+|\beta|/\mu$ and $|\beta| = \norm{\beta}_{[L^\infty(\Omega)]^n}$.
	\begin{proof}
		Due to the density of $C^2(\Omega)$ in $H^2(\Omega)$, it is enough to consider $u \in C^2(\Omega)$. Let now $0<r_0<r_1$ and $r_2 < r_3 < r_4 < d(x_0,\partial \Omega)$.
		We choose non-positive $\rho \in C^\infty(\Omega)$ 
		such that $\rho(x) = -d(x,x_0)$ outside $B_0$.
		Since $|\nabla \rho| = 1$ outside $B_0$, $\rho$ does not have critical points in $B_4 \setminus B_0$.
		Let $\chi \in C_0^\infty(B_4 \setminus B_0)$ satisfy $\chi = 1$ in $B_3 \setminus B_1 $, and set $w = \chi u$.
		We apply \cref{prop:carleman} with $K = \overline{B_4} \setminus B_0$ to get
		\begin{align}\label{eq:Carleman-Laplace}
		\mu^2 \int_{B_4 \setminus B_0} (\tau^3 |w|^2 + \tau |\nabla w|^2) e^{2\tau\phi}\, \,\mathrm{d}x
		\le C \int_{B_4 \setminus B_0} |\mu \Delta w|^2 e^{2 \tau\phi}\, \,\mathrm{d}x,
		\end{align}
		for $\phi = e^{\alpha \rho}$, with large enough $\alpha >0$, and $\tau \ge \tau_0$ (where $\tau_0 > 1$ depends only on $\alpha$ and $\rho$).
		We bound from above the right-hand side by a constant times
		$$
		\int_{B_4 \setminus B_0} |\mu \Delta w - \beta \cdot \grad w|^2 e^{2 \tau\phi}\, \,\mathrm{d}x  + |\beta|^2 \int_{B_4 \setminus B_0} |\grad w|^2 e^{2 \tau\phi}\, \,\mathrm{d}x.
		$$
		Taking $\tau \ge 2 |\beta|^2 / \mu^2$, the second term above is absorbed by the left-hand side of \eqref{eq:Carleman-Laplace} to give
		\begin{align}
		\label{balls_step2}
		\mu^2 \int_{B_4 \setminus B_0} (\tau^3 |w|^2 + \frac{\tau}{2} |\nabla w|^2) e^{2\tau\phi}\, \,\mathrm{d}x
		\le C \int_{B_4 \setminus B_0} |\mu \Delta w - \beta \cdot \grad w|^2 e^{2 \tau\phi}\, \,\mathrm{d}x.
		\end{align}
		Since $\phi\le 1$ everywhere, by defining $\Phi(r) = e^{-\alpha r}$ we now bound from below the left-hand side in \eqref{balls_step2} by
		$$
		\mu^2 \int_{B_2 \setminus B_1} (\tau^3 |w|^2 + \tau |\nabla w|^2) e^{2\tau\phi}\, \,\mathrm{d}x
		\ge
		\mu^2 \tau e^{2\tau \Phi(r_2)} \norm{u}^2_{H^1(B_2)} - \mu^2 \tau e^{2\tau} \norm{u}^2_{H^1(B_1)}.
		$$
		An upper bound for the right-hand side in \eqref{balls_step2} is given by
		\begin{align*}
		&C \int_{B_4} |\mu \Delta u - \beta \cdot \grad u|^2 e^{2 \tau\phi}\, \,\mathrm{d}x
		+ C \int_{(B_4 \setminus B_3) \cup B_1} |(\mu [\Delta, \chi] - \beta \cdot \grad \chi)u|^2 e^{2 \tau\phi}\, \,\mathrm{d}x
		\\
		&\quad \le C e^{2 \tau} \norm{\mu \Delta u - \beta \cdot \grad u}^2_{L^2(B_4)}
		+ C e^{2 \tau\Phi(r_3)} (\mu^2 + |\beta|^2) \norm{u}^2_{H^1(B_4 \setminus B_3)}
		+ C e^{2 \tau} (\mu^2 + |\beta|^2) \norm{u}^2_{H^1(B_1)}. 
		\end{align*}
		Combining the last  two inequalities we thus obtain that
		\begin{align*}
		\mu^2 e^{2\tau \Phi(r_2)} \norm{u}^2_{H^1(B_2)}
		&\le
		C e^{2\tau} \left( (\mu^2 + |\beta|^2) \norm{u}^2_{H^1(B_1)}
		+ \norm{\mu \Delta u - \beta \cdot \grad u}^2_{L^2(B_4)} \right) \\
		&+ C e^{2 \tau\Phi(r_3)} (\mu^2 + |\beta|^2) \norm{u}^2_{H^1(B_4)},
		\end{align*}
		for $\tau \ge \tau_0 + 2|\beta|^2/\mu^2$. We divide by $\mu^2$ and conclude by \cref{lem:log-convexity} with $p = 1 - \Phi(r_2) > 0$ and $q = \Phi(r_2) - \Phi(r_3) > 0$, followed by absorbing the $\tilde{Pe} = 1 + |\beta|/\mu$ factor into the exponential factor $e^{C\tilde{Pe}^2}$.
	\end{proof}
\end{corollary}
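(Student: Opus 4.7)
The plan is to apply the Carleman estimate for the Laplacian from \cref{prop:carleman} to a smooth cutoff of $u$, convert the $\mu\Delta$ term into the full operator $\mathcal{L}$ by absorbing the convective contribution at the cost of requiring $\tau$ proportional to $|\beta|^2/\mu^2$, and then extract the three-ball inequality via the log-convexity \cref{lem:log-convexity}. By density of $C^2(\Omega)$ in $H^2(\Omega)$ I may assume $u \in C^2$ throughout, and the final $\tilde{Pe}^2$ dependence will emerge as the threshold $\lambda_0$ that Carleman forces on $\tau$.

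For the geometric setup I introduce auxiliary radii $r_0 < r_1 < r_2 < r_3 < r_4 < d(x_0,\partial\Omega)$, a smooth non-positive weight $\rho$ with $\rho(x) = -|x-x_0|$ outside $B_0 := B(x_0,r_0)$ (so $|\nabla\rho| = 1$ and $\rho$ has no critical points in $B_4\setminus B_0$), and the weight $\phi = e^{\alpha\rho}$ for sufficiently large $\alpha$. Picking $\chi \in C_c^\infty(B_4\setminus B_0)$ with $\chi \equiv 1$ on $B_3\setminus B_1$ and setting $w = \chi u$, \cref{prop:carleman} together with multiplication by $\mu^2$ yields a weighted bound of $\tau^3 \mu^2 |w|^2 + \tau \mu^2 |\nabla w|^2$ by $\int e^{2\tau\phi} |\mu\Delta w|^2$.

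The core calculation is to rewrite $\mu\Delta w = -\mathcal{L}w + \beta\cdot\nabla w$ and bound $|\mu\Delta w|^2 \lesssim |\mathcal{L}w|^2 + |\beta|^2|\nabla w|^2$; the second term matches the $\tau\mu^2|\nabla w|^2$ on the left and can be absorbed once $\tau \ge 2|\beta|^2/\mu^2$. Expanding $\mathcal{L}w = \chi\mathcal{L}u + [\mathcal{L},\chi]u$, the commutator is first order with coefficients bounded by $\mu + |\beta|$ and supported in $(B_4\setminus B_3)\cup B_1$. Restricting the left-hand side to $B_2\setminus B_1$, where $\chi \equiv 1$ and $\phi \ge \Phi(r_2) := e^{-\alpha r_2}$, and using that $\phi \le 1$ on $B_4$ while $\phi \le \Phi(r_3)$ on the outer annulus $B_4\setminus B_3$, produces three distinct exponential weights. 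Dividing by $\mu^2 e^{2\tau\Phi(r_2)}$ casts the estimate in the form $c \le e^{p\tau}a + e^{-q\tau}b$ with
\begin{equation*}
p = 1-\Phi(r_2) > 0, \qquad q = \Phi(r_2) - \Phi(r_3) > 0,
\end{equation*}
valid for all $\tau \ge \lambda_0$ with $\lambda_0 = \tau_0 + C|\beta|^2/\mu^2 \le C\tilde{Pe}^2$.

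Applying \cref{lem:log-convexity} then delivers $c \le C e^{q\lambda_0} a^\kappa b^{1-\kappa}$, which is precisely the claim once the $e^{q\lambda_0}$ prefactor together with any polynomial $\tilde{Pe}$ factors coming from the commutator are dominated by a single $e^{C\tilde{Pe}^2}$. The main difficulty I anticipate is purely bookkeeping: tracking powers of $\mu$ and $|\beta|$ through the absorption and commutator steps and arranging the normalization so that the source term appears in the final inequality as $\mu^{-1}\|\mathcal{L}u\|_{L^2(\Omega)}$ rather than with some other power of $\mu$, and verifying that the commutator contributions near $\partial B_1$ are indeed of the form absorbed into the $\|u\|_{H^1(B_1)}$ ingredient of $a$.
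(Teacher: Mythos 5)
Your proposal is correct and follows essentially the same route as the paper's proof: the same auxiliary radii and non-positive weight $\rho(x)=-d(x,x_0)$, the same cutoff $w=\chi u$, the same absorption of the convective term for $\tau \ge 2|\beta|^2/\mu^2$, and the same application of \cref{lem:log-convexity} with $p = 1-\Phi(r_2)$ and $q = \Phi(r_2)-\Phi(r_3)$. The bookkeeping you anticipate works out exactly as you describe, with the commutator terms near $\partial B_1$ landing in the $\norm{u}_{H^1(B_1)}$ ingredient and the division by $\mu^2$ producing the $\mu^{-1}\norm{\mathcal{L}u}_{L^2(\Omega)}$ normalization.
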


\def\h{\hbar}
\def\scl{\text{scl}}
We now shift down the Sobolev indices in \cref{cor:3_balls} by making a similar argument to that in Section 4 of \cite{DKSU} or Section 2.2 of \cite{BNO19}, based on semiclassical pseudodifferential calculus.
\begin{lemma}
	\label{lem:shifted3b}
	Let $x_0 \in \Omega$ and $0 < r_1 < r_2 < d(x_0,\partial \Omega)$.
	Define $B_j = B(x_0, r_j)$, $j=1,2$.
	Then there are $C > 0$ and $\kappa \in (0,1)$ such that for $\mu>0$, $\beta\in [L^\infty(\Omega)]^n$ and $u \in H^2(\Omega)$ it holds that
	\begin{align*}
	\norm{u}_{L^2(B_2)} \le C e^{C \tilde{Pe}^2} \left( \norm{u}_{L^2(B_1)} + \frac{1}{\mu} \norm{\mathcal{L} u}_{H^{-1}(\Omega)} \right)^\kappa
	\norm{u}_{L^2(\Omega)}^{1-\kappa},
	\end{align*}
	where $\tilde{Pe} = 1+|\beta|/\mu$ and $|\beta| = \norm{\beta}_{[L^\infty(\Omega)]^n}$.
	\begin{proof}
		Let $\h > 0$ be the semiclassical parameter
		that satisfies $\h = 1/\tau$, where $\tau$ is the parameter previously introduced in \cref{prop:carleman}. 
		We will make use of the theory of semiclassical pseudodifferential operators, which we briefly recall in \cref{appendixB} for the convenience of the reader. In particular we will use semiclassical Sobolev spaces with norms given by
		$$
		\norm{u}_{H_\scl^s(\R^n)} = \norm{J^s u}_{L^2(\R^n)},
		$$
		where the scale of the semiclassical Bessel potentials is defined by 
		$$
		J^s = (1-\h^2 \Delta)^{s/2}, \quad s \in \R.
		$$
		
		We will also use the following commutator and pseudolocal estimates, see \cref{appendixB}.
		Suppose that $\eta,\vartheta \in C_0^\infty(\R^n)$
		and that $\eta = 1$ near $\supp(\vartheta)$,
		and let $A_\psi, B_\psi$ be two semiclassical pseudodifferential 
		operators of orders $s, m$, respectively.
		Then for all $p,q,N \in \R$, there is $C>0$,
		\begin{align}
		\norm{[A_\psi,B_\psi] u}_{H_\scl^{p}(\R^n)} 
		&\le C \h \norm{u}_{H_\scl^{p+s+m-1}(\R^n)}, \label{commutator}
		\\
		\norm{(1-\eta) A_\psi \vartheta u}_{H_\scl^{p}(\R^n)}
		&\le C \h^N \norm{u}_{H_\scl^{q}(\R^n)}. \label{pseudolocality}
		\end{align}	
		
		Let $0<r_j < r_{j+1} < d(x_0,\partial \Omega),\, j=0,\ldots,4$ and $B_j=B(x_0,r_j)$, keeping $B_1, B_2$ unchanged. Let $\tilde{r}_j \in (r_{j-1}, r_j)$ and $\tilde{B}_j=B(x_0,\tilde{r}_j),\, j=0,\ldots,3$, where $r_{-1} = 0$. Choose $\rho \in C^\infty(\Omega)$ such that $\rho(x) = -d(x,x_0)$ outside $\tilde{B_0}$, and define $\phi = e^{\alpha \rho}$ for large enough $\alpha$. Consider $v \in C_0^\infty(B_5\setminus \tilde{B_0})$. As in \cref{appendix}, by taking $\ell = \phi / \h$ and $\sigma = \Delta \ell + 3 \alpha \lambda \phi / \h$, we obtain
		\begin{equation*}
		C \int_{\R^n} |e^{\phi / \h} \Delta (e^{-\phi / \h} v) |^2 \,\mathrm{d}x
		\ge 
		\int_{\R^n} (\h^{-1} |\nabla v|^2 
		+ \h^{-3} v^2
		- |\nabla v|^2
		- \h^{-2} v^2) \,\mathrm{d}x.
		\end{equation*}
		Scaling this with $\mu^2 \h^4$, we insert the convective term and obtain that
		\begin{align*}
		C \int_{\R^n} (\mu e^{\phi / \h} \h^2 \Delta (e^{-\phi / \h} v) 
		- e^{\phi / \h} \h^2 \beta \cdot \nabla(e^{-\phi / \h} v) )^2 \,\mathrm{d}x
		\end{align*}
		can be bounded from below by
		\begin{align*}
		\int_{\R^n} \h \mu^2 (\h^2 |\nabla v|^2 + v^2) \,\mathrm{d}x 
		- \int_{\R^n} \h^{2} \mu^2 (\h^2 |\nabla v|^2 + v^2) \,\mathrm{d}x
		- \int_{\R^n} (e^{\phi / \h} \h^2 \beta \cdot \nabla(e^{-\phi / \h} v))^2 \,\mathrm{d}x.
		\end{align*}
		Since
		\begin{equation*}
		e^{\phi / \h} \h^2 \beta \cdot \nabla(e^{-\phi / \h} v) = -\h (\beta \cdot \nabla \phi) v + \h^2 \beta \cdot \nabla v,
		\end{equation*} 
		introducing the conjugated operator $P v = -\h^2 e^{\phi / \h} \mathcal L (e^{-\phi / \h}v)$, the previous bound implies
		\begin{align*}
		C \norm{Pv}_{L^2(\R^n)}^2
		&\ge 
		\h \mu^2 \norm{v}_{H_\scl^1(\R^n)}^2 
		- \h^{2} \mu^2 \norm{v}_{H_\scl^1(\R^n)}^2
		- \h^2 |\beta|^2 \norm{v}_{H_\scl^1(\R^n)}^2.
		\end{align*}
		The last two terms in the right-hand side can be absorbed by the first one when
		\begin{equation}\label{h-condition}
		\h \le \frac12 \text{ and } \h \le \frac12 \frac{\mu^2}{|\beta|^2},
		\end{equation}
		thus obtaining
		\begin{equation}\label{semiclassical_H1}
		\sqrt \h \mu \norm{v}_{H_\scl^1(\R^n)} \le C \norm{P v}_{L^2(\R^n)}.
		\end{equation}
		
		Let now $\eta, \vartheta \in C_0^\infty(B_5\setminus \tilde{B_0})$ and suppose that $\vartheta=1$ near $B_4\setminus B_0$ and $\eta = 1$ near $\supp(\vartheta)$. Let also $\chi \in C_0^\infty(B_4 \setminus B_0)$ satisfy $\chi = 1$ in $B_3 \setminus \tilde{B}_1 $. Then there is $\h_0>0$ such that for $v=\chi w,\, w \in C^\infty(\Omega)$, and $\h<\h_0$,
		\begin{equation}\label{bessel}
		\norm{v}_{L^2(\R^n)}
		\le \norm{\eta J^{-1} v}_{H_\scl^{1}(\R^n)} 
		+ \norm{(1-\eta) J^{-1} \vartheta v}_{H_\scl^{1}(\R^n)}
		\le C \norm{\eta J^{-1} v}_{H_\scl^{1}(\R^n)}, 
		\end{equation}
		where we used \eqref{pseudolocality} to absorb one term by the left-hand side.  
		From \eqref{bessel} and \eqref{semiclassical_H1} we have 
		\begin{align}\label{shift_step1}
		\sqrt \h \mu \norm{v}_{L^2(\R^n)}\le C\sqrt \h \mu \norm{\eta J^{-1} v}_{H_\scl^1(\R^n)} 
		\le C \norm{P (\eta J^{-1} v)}_{L^2(\R^n)},
		\end{align}
		and the commutator estimate \eqref{commutator} gives
		\begin{align*}
		\norm{[P,\eta J^{-1}] v}_{L^2(\R^n)}
		\le C \h \mu \norm{v}_{L^2(\R^n)} + C \h^2 |\beta| \norm{v}_{H^{-1}_\scl(\R^n)}.
		\end{align*}
		Recalling the assumption \eqref{h-condition}, these terms can be absorbed by the left-hand side of \eqref{shift_step1}, obtaining
		\begin{align}\label{P-estimate}
		\sqrt \h \mu \norm{v}_{L^2(\R^n)}\le C 
		\norm{\eta J^{-1} (Pv)}_{L^2(\R^n)}
		\le C \norm{P v}_{H_\scl^{-1}(\R^n)}.
		\end{align}
			
		We now combine this estimate with the technique used to prove \cref{cor:3_balls}. Consider $u \in C^\infty(\R^n)$ and set $w = e^{\phi/\h} u$. Take $\psi \in C_0^\infty(\Omega)$ supported in $B_1 \cup (B_5 \setminus \tilde{B}_3)$ with $\psi = 1$ in $(\tilde{B}_1\setminus B_0) \cup (B_4\setminus B_3)$. Recall that $\chi \in C_0^\infty(B_4 \setminus B_0)$ satisfies $\chi = 1$ in $B_3 \setminus \tilde{B}_1 $. 
		Using \eqref{commutator} to bound the commutator
		$$
		\norm{[P,\chi] w}_{H_\scl^{-1}(\R^n)}
		\le
		\norm{[P,\chi] \psi w}_{H_\scl^{-1}(\R^n)}
		\le C \h (\mu+|\beta|) \norm{\psi w}_{L^2 (\R^n)},
		$$
		we obtain from \eqref{P-estimate} that
		\begin{align*}
		\sqrt \h \mu \norm{\chi w}_{L^2(\R^n)}
		&\le 
		C \norm{\chi P w}_{H_\scl^{-1}(\R^n)}
		+ C \h (\mu+|\beta|) \norm{\psi w}_{L^2 (\R^n)}.
		\end{align*}
		This leads to
		\begin{align*}
		\sqrt \h \mu \norm{\chi e^{\phi/\h} u}_{L^2(\R^n)}
		\le
		C \norm{\chi e^{\phi / \h} (\mu \Delta u - \beta \cdot \nabla u ) }_{H^{-1}(\R^n)}
		 + C \h (\mu+|\beta|) \norm{\psi e^{\phi/\h} u}_{L^2 (\R^n)},
		\end{align*}
		where we used the norm inequality $\norm{\cdot}_{H_\scl^{-1}(\R^n)} \le C \h^{-2} \norm{\cdot}_{H^{-1}(\R^n)}$. Letting $\Phi(r) = e^{-\alpha r}$ and using a similar argument as in the proof of \cref{cor:3_balls}, we find that
		\begin{align*}
		\mu e^{\Phi(r_2)/\h} \norm{u}_{L^2(B_2)}
		&\le
		C e^{1/\h} \left( (\mu+|\beta|) \norm{u}_{L^2(B_1)}
		+ \h^{-\frac32}\norm{ (\mu \Delta u - \beta \cdot \nabla u ) }_{H^{-1}(\Omega)} \right)
		\\& + C e^{\Phi(\tilde{r}_3)/\h} \h^{\frac12} (\mu+|\beta|) \norm{u}_{L^2 (\Omega)},
		\end{align*}		
		when $\h$ satisfies \eqref{h-condition} and is small enough.
		Absorbing the negative power of $\h$ in the exponential, we then use \cref{lem:log-convexity} and conclude by absorbing the $\tilde{Pe} = 1 + |\beta|/\mu$ factor into the exponential factor $e^{C\tilde{Pe}^2}$.
	\end{proof}
\end{lemma}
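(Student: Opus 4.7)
The plan is to upgrade the Carleman estimate of \cref{prop:carleman} to its natural semiclassical form with parameter $\h = 1/\tau$, and then compose with the semiclassical Bessel potential $J^{-1} = (1-\h^2\Delta)^{-1/2}$ to lower the Sobolev indices by one on both sides simultaneously. The outer three-ball shell construction is the same as in \cref{cor:3_balls}; what is new is this conjugation argument, which is a standard pseudodifferential trick (as in \cite{DKSU,BNO19}) but must now be executed with explicit bookkeeping of $\mu$ and $|\beta|$ so that the resulting estimate is genuinely uniform in the P\'eclet number.

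More concretely, I would first rewrite \cref{prop:carleman} in the form
\[
\sqrt{\h}\,\mu\, \|v\|_{H^1_\scl(\R^n)} \le C \|Pv\|_{L^2(\R^n)},
\qquad Pv := -\h^2 e^{\phi/\h}\mathcal L(e^{-\phi/\h}v),
\]
valid for $v\in C_0^\infty(B_5\setminus \widetilde B_0)$, after multiplying the Laplace Carleman inequality by $\mu^2\h^4$, inserting the convective term $\beta\cdot\nabla u$, and absorbing the resulting $\h^2|\beta|^2\|v\|_{H^1_\scl}^2$ contribution into the left-hand side under the smallness condition $\h \le \tfrac12\min(1,\mu^2/|\beta|^2)$; it is exactly this smallness condition that will later become a $\tilde{Pe}^2$ loss in the exponential. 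I would next apply the above inequality to $\eta J^{-1}v$ for a suitable cutoff $\eta$, use pseudolocality to estimate $(1-\eta)J^{-1}\vartheta v$ by any negative power of $\h$, and use the commutator bound $\|[P,\eta J^{-1}]v\|_{L^2}\lesssim \h\mu\|v\|_{L^2}+\h^2|\beta|\|v\|_{H^{-1}_\scl}$ to absorb the error, yielding the shifted semiclassical estimate
\[
\sqrt{\h}\,\mu\,\|v\|_{L^2(\R^n)} \le C\|Pv\|_{H^{-1}_\scl(\R^n)}.
\]

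With that estimate in hand, I would repeat the argument of \cref{cor:3_balls}: set $w = e^{\phi/\h}u$, apply the shifted estimate to $\chi w$ for $\chi$ a cutoff of $B_3\setminus \widetilde B_1$, and control the commutator $[P,\chi]w$ (which is supported in the shells where $\phi \le \Phi(\tilde r_3) < 1$) by another application of the commutator bound, absorbing a $\|{\psi w}\|_{L^2}$ term with $\psi$ a cutoff of those shells. On the inner shell $\supp\chi\cap B_2$ one uses $\phi\ge \Phi(r_2)$ to pull out $e^{\Phi(r_2)/\h}\|u\|_{L^2(B_2)}$, while on the outer shell one uses $\phi\le \Phi(\tilde r_3)$ and the norm conversion $\|\cdot\|_{H^{-1}_\scl(\R^n)}\le C\h^{-2}\|\cdot\|_{H^{-1}(\R^n)}$ on the source term; the inner cutoff contributes $e^{1/\h}\|u\|_{L^2(B_1)}$ times factors of $\mu+|\beta|$. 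The three exponentials $e^{\Phi(r_2)/\h}$, $e^{1/\h}$, $e^{\Phi(\tilde r_3)/\h}$ with $\Phi(r_2)>\Phi(\tilde r_3)$ are exactly the input needed for \cref{lem:log-convexity} with $p=1-\Phi(r_2)$ and $q=\Phi(r_2)-\Phi(\tilde r_3)$, producing the interpolation exponent $\kappa\in(0,1)$.

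The main obstacle I expect is the cleanest part to get right: ensuring that every occurrence of $|\beta|/\mu$ generated by the commutator estimates and by the condition $\h\le\mu^2/(2|\beta|^2)$ is harmless in the final bound. All such losses can be made multiplicative in $\tilde{Pe}$ polynomially, and any such polynomial factor is then absorbed into $e^{C\tilde{Pe}^2}$ at the very end, exactly as in \cref{cor:3_balls}. A secondary point of care is the treatment of powers of $\h$ (such as the $\h^{-3/2}$ appearing when converting the $H^{-1}_\scl$ norm of the source to $H^{-1}$), but these are polynomial in $\h$ and are absorbed in the exponential factor $e^{1/\h}$ before invoking \cref{lem:log-convexity}, so they do not affect the final exponents $\kappa$ and $1-\kappa$.
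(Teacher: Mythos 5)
Your proposal follows essentially the same route as the paper's own proof: the same semiclassical rewriting $\sqrt{\h}\,\mu\|v\|_{H^1_\scl}\le C\|Pv\|_{L^2}$ under the smallness condition $\h\le\tfrac12\min(1,\mu^2/|\beta|^2)$, the same conjugation by $\eta J^{-1}$ with pseudolocality and commutator absorption to reach $\sqrt{\h}\,\mu\|v\|_{L^2}\le C\|Pv\|_{H^{-1}_\scl}$, and the same shell construction, norm conversion, and application of \cref{lem:log-convexity} with $p=1-\Phi(r_2)$ and $q=\Phi(r_2)-\Phi(\tilde r_3)$. The bookkeeping of $\mu$, $|\beta|$, and the powers of $\h$ you describe matches the paper's treatment, so there is nothing to add.
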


Making the additional coercivity assumption $\grad \cdot \beta \leq 0$, we can weaken the norms just in the right-hand side of \cref{cor:3_balls} by using the stability estimate for a well-posed convection-diffusion problem with homogeneous Dirichlet boundary conditions.

\begin{corollary}
	\label{cor:3_balls_impr}
	Let $x_0 \in \Omega$ and $0 < r_1 < r_2 < d(x_0,\partial \Omega)$.
	Define $B_j = B(x_0, r_j)$, $j=1,2$.
	Then there are $C > 0$ and $\kappa \in (0,1)$ such that for $\mu>0$, $\beta\in[W^{1,\infty}(\Omega)]^n$ having $\esssup_{\Omega} \grad \cdot \beta \leq 0$, and $u \in H^2(\Omega)$ it holds that
	\begin{align*}
	\norm{u}_{H^1(B_2)} &\le C e^{C \tilde{Pe}^2} \left( \norm{u}_{L^2(B_1)} + \frac{1}{\mu} \norm{\mathcal{L} u}_{H^{-1}(\Omega)}\right)^\kappa
	\left( \norm{u}_{L^2(\Omega)} + \frac{1}{\mu} \norm{\mathcal{L} u}_{H^{-1}(\Omega)} \right)^{1-\kappa},
	\end{align*}
	where $\tilde{Pe} = 1+|\beta|/\mu$ and $|\beta| = \norm{\beta}_{[L^\infty(\Omega)]^n}$.
	\begin{proof}
		Let the balls $B_0,B_3\subset \Omega$ such that $B_{j}\subset B_{j+1}$, for $j=0,2$. Consider the well-posed problem
		\begin{equation*}
		\mathcal{L} w = \mathcal{L} u \text{ in } B_3,\quad w=0 \text{ on } \partial B_3.
		\end{equation*}
		Since $\esssup_{\Omega} \grad \cdot \beta \leq 0$, as a consequence of the divergence theorem we have
		\begin{equation*}
		\|w\|_{H^1(B_3)} \le C \frac{1}{\mu} \|\mathcal{L} u\|_{H^{-1}(B_3)}.
		\end{equation*}
		
		Taking $v = u - w$, we have $\mathcal{L}v = 0$ in $B_3$. The stability estimate in \cref{cor:3_balls} used for $B_0,B_2,B_3$ reads as
		$$
		\norm{v}_{H^1(B_2)}
		\le
		C e^{C \tilde{Pe}^2} \norm{v}^\kappa_{H^1(B_0)} \norm{v}_{H^1(B_3)}^{1-\kappa},
		$$ 
		and the following estimates hold
		\begin{align*}
		\norm{u}_{H^1(B_2)}
		&\le
		\norm{v}_{H^1(B_2)} + \norm{w}_{H^1(B_2)}\\
		&\le
		C e^{C \tilde{Pe}^2} ( \norm{u}_{H^1(B_0)} + \frac{1}{\mu} \norm{\mathcal{L} u}_{H^{-1}(\Omega)} )^\kappa ( \norm{u}_{H^1(B_3)} + \frac{1}{\mu} \norm{\mathcal{L} u}_{H^{-1}(\Omega)} )^{1-\kappa}.
		\end{align*}
		Now we choose a cutoff function $\chi \in C^{\infty}_0(B_1)$ such that $\chi = 1$ in $B_0$. Then $\chi u$ satisfies
		$$
		\mathcal{L}(\chi u)=\chi \mathcal{L}u + [\mathcal{L}, \chi]u,\quad \chi u = 0 \text{ on } \p B_1,
		$$
		and we obtain
		\begin{align*}
		\norm{u}_{H^1(B_0)} \le \norm{\chi u}_{H^1(B_1)}
		&\le C \frac{1}{\mu} \left( \norm{ [\mathcal{L},\chi]u }_{H^{-1}(B_1)} + \norm{ \chi \mathcal{L}u }_{H^{-1}(B_1)} \right) \\
		&\le C \frac{1}{\mu} \left( (\mu + |\beta|)\norm{u}_{L^2(B_1)} + \norm{\mathcal{L}u}_{H^{-1}(\Omega)} \right)
		\end{align*}
		The same argument for $B_3 \subset \Omega$ gives
		$$
		\norm{u}_{H^1(B_3)} \le C \frac{1}{\mu} \left( (\mu + |\beta|)\norm{u}_{L^2(\Omega)} + \norm{\mathcal{L}u}_{H^{-1}(\Omega)} \right), 
		$$
		thus leading to the conclusion after absorbing the $\tilde{Pe} = 1 + |\beta|/\mu$ factor into the exponential factor $e^{C\tilde{Pe}^2}$.
	\end{proof}
\end{corollary}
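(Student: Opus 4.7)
The plan is to reduce to \cref{cor:3_balls} via two small reductions: first lift off the right-hand side $\mathcal{L}u$ using a well-posed Dirichlet subproblem (this is where the sign condition $\grad\cdot\beta\le 0$ enters), and then replace the $H^1$-norms on the innermost and outer balls by $L^2$-norms plus controlled source contributions, using smooth cutoffs.

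\textbf{Dirichlet lift.} First I would pick an intermediate ball $B_3$ with $B_2\Subset B_3\subset \Omega$ and solve $\mathcal{L}w = \mathcal{L}u$ in $B_3$ with $w=0$ on $\p B_3$. The hypothesis $\grad\cdot\beta\le 0$ kills the skew-symmetric contribution of the convective term in the weak formulation, giving the standard energy estimate $\|w\|_{H^1(B_3)}\le C\mu^{-1}\|\mathcal{L}u\|_{H^{-1}(B_3)}$. Setting $v := u-w$ then produces an $\mathcal{L}$-harmonic function on $B_3$, so \cref{cor:3_balls} applied to $v$ on a triple $B_0 \subset B_2 \subset B_3$ (with $B_0$ a ball slightly smaller than the target $B_1$) yields a three-ball inequality for $v$. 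A triangle inequality converts it into a bound for $\|u\|_{H^1(B_2)}$ in terms of $\|u\|_{H^1(B_0)}$, $\|u\|_{H^1(B_3)}$, and additive $\mu^{-1}\|\mathcal{L}u\|_{H^{-1}(\Omega)}$ contributions coming from $w$.

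\textbf{$H^1\to L^2$ by cutoff.} To weaken the $\|u\|_{H^1(B_0)}$ factor I would take $\chi\in C_0^\infty(B_1)$ with $\chi=1$ on $B_0$ and observe that $\chi u$ satisfies $\mathcal{L}(\chi u) = \chi \mathcal{L}u + [\mathcal{L},\chi]u$ with zero boundary data on $\p B_1$. Reusing the energy estimate above, together with the observation that $[\mathcal{L},\chi]u = -\mu[\Delta,\chi]u + (\beta\cdot\grad\chi)u$ can be controlled in $H^{-1}(B_1)$ by $(\mu+|\beta|)\|u\|_{L^2(B_1)}$ (moving one derivative off $u$ onto the $H^1_0$-test function), gives $\|u\|_{H^1(B_0)} \le C\mu^{-1}\bigl((\mu+|\beta|)\|u\|_{L^2(B_1)} + \|\mathcal{L}u\|_{H^{-1}(\Omega)}\bigr)$. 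The analogous cutoff argument on $B_3$ replaces $\|u\|_{H^1(B_3)}$ by the corresponding expression with $\|u\|_{L^2(\Omega)}$.

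\textbf{Assembly and anticipated obstacle.} Substituting the two cutoff bounds into the three-ball estimate and collecting the $(\mu+|\beta|)/\mu = \tilde{Pe}$ prefactors that appear, I would absorb these polynomial factors into the exponential $e^{C\tilde{Pe}^2}$ already present on the right-hand side, since polynomial growth in $\tilde{Pe}$ is dominated by quadratic-exponential growth. The main delicacy I anticipate is verifying that the sign condition $\grad\cdot\beta\le 0$ really buys the coercivity constant $\mu$ (and not a smaller quantity involving $|\beta|$) in the Dirichlet energy estimate, so that the final Péclet dependence stays at $e^{C\tilde{Pe}^2}$; everything else is the standard cutoff-and-commutator bookkeeping, but here it must be carried out keeping $\mu$ and $|\beta|$ fully explicit throughout.
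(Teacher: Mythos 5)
Your proposal is correct and follows essentially the same route as the paper's proof: the Dirichlet lift on an enlarged ball $B_3$ using $\grad\cdot\beta\le 0$ for the energy estimate, the application of \cref{cor:3_balls} to $v=u-w$, and the cutoff-commutator argument to downgrade the $H^1$-norms on $B_0$ and $B_3$ to $L^2$-norms on $B_1$ and $\Omega$, with the polynomial $\tilde{Pe}$ factors absorbed into $e^{C\tilde{Pe}^2}$. The coercivity concern you flag is handled exactly as you expect: the skew part of the convective term is nonnegative under the divergence condition, so the Dirichlet form is coercive with constant $\mu$.
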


\begin{remark}\label{remark-kappa}
	In the geometric setting of this section one can be more precise about the H\"older exponent $\kappa$ in the conditional stability estimates. For this we recall some known results for second-order elliptic equations: we refer to \cite[Theorem 2.1]{ARRV09} for the Laplace equation, and for the case including lower-order terms to \cite[Theorem 3]{MV12}. Let $u$ be a homogeneous solution of \eqref{eq:model_problem} with $f\equiv 0$. For a constant $C_{st}$ depending implicitly on the coefficients $\mu$ and $\beta$, the following three-ball inequality holds
	$$
	\|u\|_{L^2(B_2)} \le C_{st} \|u\|_{L^2(B_1)}^{\kappa} \|u\|_{L^2(B_3)}^{1-\kappa},
	$$
	where $B_j$ are concentric balls in $\Omega$ with increasing radii $r_j$. The constant $C_{st}$ does not depend on the radii $r_1,\,r_2$, but it does depend on $r_3$. The exponent $\kappa \in(0,1)$ is given by
	$$
	\kappa = \frac{\log \frac{r_3}{r_2}}{C_3\log \frac{r_2}{r_1} + \log \frac{r_3}{r_2}},
	$$
	where $C_3>0$ is a constant depending on $r_3$.
\end{remark}

\section{Finite element method}\label{sec:FEM}
Let $V_h$ denote the space of piecewise affine finite element functions
defined on a conforming computational mesh $\mathcal{T}_h = \{ K\}$. $\mathcal{T}_h$ consists
of shape regular triangular elements $K$ with diameter $h_K$ and is quasi-uniform. We define the
global mesh size by $h = \max_{K\in \mathcal{T}_h} h_K$. The
interior faces of the triangulation will be denoted by
$\mathcal{F}_i$, the jump of a quantity across a face $F$ by $\jump{\cdot}_F$, and the outward unit normal by $n$.

Let $\beta\in [W^{1,\infty}(\Omega)]^n$ and adopt the shorthand notation $|\beta| := \|\beta\|_{[L^\infty(\Omega)]^n}$. As already stated in \cref{sec:intro}, we consider the diffusion-dominated regime given by the low P\'eclet number
\begin{equation}\label{eq:Peclet_low}
Pe(h) := \frac{|\beta| h}{\mu} < 1.
\end{equation}
We will denote by $C$ a generic positive constant independent of the mesh size and the P\'eclet number.
Let $\pi_h:L^2(\Omega) \mapsto V_h$ denote the standard
$L^2$-projection on $V_h$, which for $k=1,2$ and $m=0,k-1$ satisfies
\begin{align*}
\norm{\pi_h u}_{H^m(\Omega)} &\le C \norm{u}_{H^m(\Omega)},\quad u\in H^m(\Omega), \\
\norm{u-\pi_h u}_{H^m(\Omega)} &\le C h^{k-m} \norm{u}_{H^k(\Omega)},\quad u\in H^k(\Omega) \label{interp}.
\end{align*}
We introduce the standard inner products with the induced norms  
$$
(v_h,w_h)_\Omega := \int_{\Omega} v_h w_h \,\mathrm{d}x,
$$
$$
\left<v_h,w_h\right>_{\partial \Omega} := \int_{\partial \Omega} v_h w_h \,\mathrm{d}s,
$$
and the following bilinear forms
$$
a_h(v_h,w_h) := (\beta \cdot \nabla  v_h, w_h )_\Omega+
( \mu \nabla v_h , \nabla w_h  )_{\Omega}-
\left< \mu \nabla v_h \cdot n , w_h \right>_{\partial \Omega},
$$
$$
s_\Omega(v_h,w_h) := \gamma \sum_{F \in \mathcal{F}_i} \int_{F}h (\mu +
|\beta|h) \jump{\nabla v_h \cdot n}_F \jump{\nabla w_h \cdot n}_F \,\mathrm{d}s,
$$
$$
s_{\omega}(v_h,w_h) := ((\mu + |\beta| h) v_h, w_h)_{ \omega},
$$
$$
s(v_h,w_h) := s_\Omega(v_h,w_h) + s_{\omega}(v_h,w_h),
$$
and
$$
s_*(v_h,w_h) :=\gamma_*( \left<(\mu h^{-1} + |\beta|)
v_h,w_h\right>_{\partial \Omega}+ (\mu \nabla v_h, \nabla w_h )_\Omega+ s_\Omega(v_h,w_h)).
$$
The terms $s_\Omega$ and $s_*$ are stabilizing terms, while the term $s_\omega$ is aimed for data assimilation. After scaling with the coefficients in the above forms, Lemma 2 in \cite{BHL18} writes as
\begin{equation}\label{eq:Poincare}
\|(\mu^{\frac12} h + |\beta|^{\frac12} h^{\frac32})
v_h\|_{H^1(\Omega)} \leq C \gamma^{-\frac12} s(v_h,v_h)^{\frac12}, \quad \forall v_h \in V_h,
\end{equation}
and Lemma 2 in \cite{BO18} gives the jump inequality
\begin{equation}\label{jumpineq}
s_\Omega(\pi_h u,\pi_h u) \le C \gamma (\mu + |\beta|h) h^2 |u|^2_{H^2(\Omega)},\quad \forall u\in H^2(\Omega).
\end{equation}

The parameters $\gamma$ and $\gamma_*$ in $s_\Omega$ and $s_*$, respectively, are fixed at the implementation level and, to alleviate notation, our analysis covers the choice $\gamma=1=\gamma_*$.

We can then use the general framework in \cite{Bur13} to write the finite element method for unique continuation subject to
\eqref{eq:model_problem} as follows. Consider a discrete Lagrange multiplier $z_h\in V_h$ and aim to find the saddle points of the functional
\begin{align*}
L_h(u_h,z_h) :&= \frac12 s_\omega (u_h - \tilde U_\omega, u_h - \tilde U_\omega) + a_h(u_h,z_h) - (f,z_h)_\Omega \\
&+ \frac12 s_\Omega(u_h,u_h) - \frac12 s_*(z_h,z_h),
\end{align*}
where we recall that $\tilde U_\omega = u\vert_{\omega} + \delta$ and $u\in H^2(\Omega)$ is a solution to \eqref{eq:model_problem}. The Euler-Lagrange equations for $L_h$ lead to the following discrete problem: find $(u_h,z_h)  \in [V_h]^2$ such that
\begin{equation}\label{eq:model_FEM}
\left\{\begin{array}{rcl}
a_h(u_h,w_h) - s_*(z_h,w_h) &=& (f,w_h)_\Omega\\
a_h(v_h,z_h) + s(u_h,v_h) &=&s_{\omega}(\tilde U_\omega,v_h)
\end{array} \right.
,\quad \forall (v_h,w_h)  \in [V_h]^2,
\end{equation}

We observe that by the ill-posed character of the problem, only the stabilization operators $s_\Omega$ and $s_*$ provide some stability to the discrete system, and the corresponding system matrix is expected to be ill-conditioned. To quantify this effect we first prove an upper bound on the condition number.

\begin{proposition}\label{prop:unique}
	The finite element formulation \eqref{eq:model_FEM} has a unique solution  $(u_h,z_h) \in [V_h]^2$ and the Euclidean condition number $\mathcal{K}_2$ of the system matrix  satisfies $$\mathcal{K}_2 \le C h^{-4}.$$ 
	\begin{proof}
		We write \eqref{eq:model_FEM} as the linear system $A[(u_h,z_h),(v_h,w_h)] = (f,w_h)_\Omega + s_{\omega}(\tilde U_\omega,v_h)$, for all $(v_h,w_h)  \in [V_h]^2$, where $$A[(u_h,z_h),(v_h,w_h)] := a_h(u_h,w_h) - s_*(z_h,w_h) + a_h(v_h,z_h) + s(u_h,v_h).$$
		Since $A[(u_h,z_h),(u_h,-z_h)] = s(u_h,u_h) + s_*(z_h,z_h)$, using \eqref{eq:Poincare} the following inf-sup condition holds
		$$
		 \Psi_h:= \adjustlimits \inf_{(u_h,z_h)\in [V_h]^2} \sup_{(v_h,w_h)\in [V_h]^2} \frac{A[(u_h,z_h),(v_h,w_h)]}{\|(u_h,z_h)\|_{L^2(\Omega)}\|(v_h,w_h)\|_{L^2(\Omega)}} \ge C \mu (1+Pe(h)) h^2.
		$$
		This provides the existence of a unique solution for the linear system. We use \cite[Theorem 3.1]{EG06} to estimate the condition number by
		\begin{equation}\label{eq:condition-ineq}
		\mathcal{K}_2 \le C \frac{\Upsilon_h}{\Psi_h},
		\end{equation}
		where 
		$$
		\Upsilon_h:= \adjustlimits \sup_{(u_h,z_h)\in [V_h]^2} \sup_{(v_h,w_h)\in [V_h]^2} \frac{A[(u_h,z_h),(v_h,w_h)]}{\|(u_h,z_h)\|_{L^2(\Omega)}\|(v_h,w_h)\|_{L^2(\Omega)}}.
		$$
		We recall the following discrete inverse inequality, see for instance \cite[Lemma 1.138]{EG04},
		\begin{equation}\label{eq:inverse-ineq}
		\| \nabla v_h\|_{L^2(K)} \le C h^{-1} \|v_h\|_{L^2(K)},\quad \forall v_h \in \mathbb{P}_1(K).
		\end{equation}
		We also recall the following continuous trace inequality, see for instance \cite{MS99},
		\begin{equation}\label{eq:trace_cont}
		\|v\|_{L^2(\partial K)} \le C( h^{-\frac12} \|v\|_{L^2(K)} + h^{\frac12} \|\nabla v\|_{L^2(K)}),\quad \forall v \in H^1(K),
		\end{equation}
		and the discrete one
		\begin{equation}\label{eq:trace_grad}
		\|\nabla v_h \cdot n\|_{L^2(\partial K)} \le C h^{-\frac12} \|\nabla v_h\|_{L^2(K)},\quad \forall v_h \in \mathbb{P}_1(K).
		\end{equation}
		Using the Cauchy-Schwarz inequality together with \eqref{eq:trace_grad} and \eqref{eq:inverse-ineq} we get
		\begin{align*}
		s_\Omega(u_h,v_h) &= \gamma \mu (1+Pe(h)) \sum_{F \in \mathcal{F}_i} \int_{F}h \jump{\nabla u_h \cdot n}_F \jump{\nabla v_h \cdot n}_F ~\mbox{d}s
		\\&\le C \mu (1+Pe(h)) h^{-2} \| u_h\|_{L^2(\Omega)} \| v_h\|_{L^2(\Omega)},
		\end{align*}
		hence
		\begin{equation*}
		s(u_h,v_h) \le C \mu (1+Pe(h)) h^{-2} \| u_h\|_{L^2(\Omega)} \| v_h\|_{L^2(\Omega)}.
		\end{equation*}
		Combining this with the Cauchy-Schwarz inequality and the inequalities \eqref{eq:inverse-ineq} and \eqref{eq:trace_cont}, we obtain
		\begin{equation*}
		-s_*(z_h,w_h) \le C \mu (1+Pe(h)) h^{-2} \| z_h\|_{L^2(\Omega)} \| w_h\|_{L^2(\Omega)}.
		\end{equation*}
		Again due to the Cauchy-Schwarz inequality, and trace and inverse inequalities,  we have
		\begin{align*}
		a_h(u_h,w_h) &= (\beta \cdot \nabla  u_h, w_h )_\Omega + \mu \sum_{F \in \mathcal{F}_i} \int_{F}h \jump{\nabla u_h \cdot n}_F w_h \,\mathrm{d}s
		\\&\le C \mu (1+Pe(h)) h^{-2} \| u_h\|_{L^2(\Omega)} \| w_h\|_{L^2(\Omega)},
		\end{align*}
		Collecting the above estimates we have
		$
		\Upsilon_h \le C \mu (1+Pe(h)) h^{-2},
		$
		and we conclude by \eqref{eq:condition-ineq}.
	\end{proof}
\end{proposition}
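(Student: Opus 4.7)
The plan is to recast \eqref{eq:model_FEM} as a square linear system
$$
A[(u_h,z_h),(v_h,w_h)] = (f,w_h)_\Omega + s_\omega(\tilde U_\omega,v_h), \quad \forall (v_h,w_h)\in[V_h]^2,
$$
with
$$
A[(u_h,z_h),(v_h,w_h)] := a_h(u_h,w_h) - s_*(z_h,w_h) + a_h(v_h,z_h) + s(u_h,v_h),
$$
and then to invoke the abstract bound $\mathcal{K}_2 \le C \Upsilon_h/\Psi_h$ from \cite[Theorem 3.1]{EG06}, where $\Psi_h$ and $\Upsilon_h$ are respectively the $L^2$--$L^2$ inf-sup and continuity constants of $A$. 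Positivity of $\Psi_h$ will automatically deliver unique solvability, so both assertions reduce to estimating these two constants.

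For the lower bound on $\Psi_h$, the key observation is that testing $A[(u_h,z_h),\cdot]$ against the natural choice $(v_h,w_h)=(u_h,-z_h)$ cancels the two $a_h$ cross-terms and leaves the nonnegative quantity $A[(u_h,z_h),(u_h,-z_h)] = s(u_h,u_h) + s_*(z_h,z_h)$. The Poincar\'e-type estimate \eqref{eq:Poincare} applied to $s$ gives $s(u_h,u_h) \ge C \mu h^2(1+Pe(h))\|u_h\|_{L^2}^2$. For $s_*$, the boundary weight $\mu h^{-1}+|\beta|$ combined with the bulk $\mu\|\nabla z_h\|_{L^2}^2$ controls $\mu\|z_h\|_{L^2}^2$ from below through a Friedrichs-type inequality, which in the low-P\'eclet regime is at least $C\mu h^2(1+Pe(h))\|z_h\|_{L^2}^2$. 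Summing, one obtains $\Psi_h \ge C \mu (1+Pe(h)) h^2$.

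For the upper bound on $\Upsilon_h$, I would apply the Cauchy--Schwarz inequality to each of the four bilinear forms in $A$ and collapse face and gradient norms onto $L^2$ element norms using the discrete inverse inequality \eqref{eq:inverse-ineq} and the trace inequalities \eqref{eq:trace_cont}--\eqref{eq:trace_grad}. Each face-jump factor picks up $h^{-1/2}$ from the discrete trace and $h^{-1}$ from the inverse estimate; together with the weight $h(\mu+|\beta|h)$ this yields $s_\Omega(\cdot,\cdot)\le C\mu(1+Pe(h)) h^{-2}\|\cdot\|_{L^2}\|\cdot\|_{L^2}$, and analogous computations handle the remaining contributions to $s$, $s_*$ and $a_h$; for the diffusive part of $a_h$ one uses that $u_h$ is piecewise affine to convert the bulk gradient into face-jump integrals by elementwise integration by parts. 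The convective term $(\beta\cdot\nabla u_h,w_h)_\Omega$ costs only $|\beta|h^{-1} = \mu Pe(h) h^{-2}$, which is absorbed into the same bound. Collecting gives $\Upsilon_h \le C\mu(1+Pe(h)) h^{-2}$, and the ratio $\Upsilon_h/\Psi_h$ eliminates the common factor $\mu(1+Pe(h))$, leaving $\mathcal{K}_2 \le C h^{-4}$.

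The main technical point to check is the coercivity of $s_*$: unlike $s$, its Poincar\'e-type control does not follow directly from \eqref{eq:Poincare}, and one must verify that the boundary-trace and bulk-gradient pieces together control $\|z_h\|_{L^2(\Omega)}$ from below with the correct $\mu$-scaling via a Friedrichs-type argument. Once this is in place, the rest of the proof is careful bookkeeping of the $\mu$, $|\beta|$ and $h$ weights so that the P\'eclet prefactors cancel on taking the ratio.
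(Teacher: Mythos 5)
Your proposal follows the paper's proof essentially verbatim: the same reformulation as a single bilinear form $A$ on $[V_h]^2$, the same test pair $(u_h,-z_h)$ giving $s(u_h,u_h)+s_*(z_h,z_h)$ and hence the inf-sup bound $\Psi_h \ge C\mu(1+Pe(h))h^2$ via \eqref{eq:Poincare}, the same appeal to $\mathcal{K}_2 \le C\,\Upsilon_h/\Psi_h$, and the same continuity bound $\Upsilon_h \le C\mu(1+Pe(h))h^{-2}$ obtained from Cauchy--Schwarz together with the inverse and trace inequalities (including the elementwise integration by parts of the diffusive part of $a_h$ into face jumps). The one point you flag as needing care---that the lower bound for $s_*(z_h,z_h)$ in terms of $\mu\|z_h\|_{L^2(\Omega)}^2$ requires a Friedrichs-type inequality combining the boundary and bulk-gradient terms---is indeed left implicit in the paper, and your sketch of it is correct in the low-P\'eclet regime.
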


\subsection{Error estimates for the weakly consistent regularization}
The error analysis proceeds in two main steps:
\begin{enumerate}[(i)]
	\item First we prove that the stabilizing terms and the data fitting term must vanish at an optimal rate for smooth solutions, with constant independent of the physical stability (\cref{prop:conv_stab}).
	\item Then we show that the residual of the PDE is bounded by the stabilizing terms and the data fitting term. Using this result together with the first step and the continuous stability estimates in \cref{sec:stability_estimates}, we prove $L^2$- and $H^1$-convergence results (Theorems \ref{L^2-error} and \ref{H^1-error}).
\end{enumerate}
To quantify stabilization and data fitting for $(v_h,w_h) \in [V_h]^2$ we introduce the norm 
$$
\|(v_h,w_h)\|^2_s:= s(v_h,v_h) + s_*(w_h,w_h).
$$
We also define the ``continuity norm'' on $H^{\frac32+\epsilon}(\Omega)$, for any $\epsilon>0$,
$$
\| v \|_\sharp:= \| |\beta|^\frac12 h^{-\frac12} v \|_{\Omega} + \|\mu^{\frac12} \nabla
v\|_\Omega + \| \mu^{\frac12} h^{\frac12} \nabla v \cdot n\|_{\partial \Omega}.
$$
Using standard approximation properties and the trace inequality \eqref{eq:trace_cont}, we have 
\begin{align*}
\| u - \pi_h u \|_\sharp \le C ( \mu^{\frac12} h + |\beta|^\frac12 h^{\frac32} ) |u|_{H^2(\Omega)}.
\end{align*}
Using \eqref{jumpineq} and interpolation
\begin{align*}
\|(u - \pi_h u,0)\|^2_s &= s(u -\pi_h u,u -\pi_h u) = s_\Omega(\pi_h u,\pi_h u) + s_\omega(u -\pi_h u,u -\pi_h u)
\\
&\le C (\mu h^2 + |\beta| h^3) |u|^2_{H^2(\Omega)},
\end{align*}
where we used that $s_\Omega(u,v_h) = 0$, since $u \in H^2(\Omega)$. Hence it follows that for $u \in H^2(\Omega)$
\begin{equation}\label{eq:approx}
\|(u - \pi_h u,0)\|_s + \| u - \pi_h u \|_\sharp \le C (\mu^{\frac12} h +
|\beta|^{\frac12} h^{\frac32}) |u|_{H^2(\Omega)}.
\end{equation}
Observe that, when $Pe(h)<1$, the first term dominates and the estimate is $O(h)$, whereas when $Pe(h)>1$ the bound is $O(h^{\frac32})$. We note in passing that the same estimates hold for the nodal interpolant.

\begin{lemma}[Consistency]\label{lem:consist}
	Let $u \in H^2(\Omega)$ be a solution to \eqref{eq:model_problem} and $(u_h,z_h) \in [V_h]^2$ the solution to \eqref{eq:model_FEM}, then
	$$
	a_h(\pi_h u - u_h,w_h) + s_*(z_h,w_h) = a_h(\pi_h u - u,w_h),
	$$
	and
	$$
	-a_h(v_h,z_h) + s(\pi_h u - u_h,v_h) = s_\Omega(\pi_h u - u,v_h) +
	s_\omega(\pi_h u - \tilde U_\omega,v_h),
	$$
	for all $(v_h,w_h)\in [V_h]^2$.
	\begin{proof}
		The first claim follows from the definition of $a_h$, since
		\begin{equation*}
		a_h(u_h,w_h) - s_*(z_h,w_h) = (f,w_h)_\Omega = (\beta \cdot \nabla u -
		\mu \Delta u, w_h)_\Omega = a_h(u,w_h),
		\end{equation*}
		where in the last equality we integrated by parts.
		The second claim follows similarly from
		$$
		a_h(v_h,z_h) + s(u_h,v_h) = 
		s_\omega(\tilde U_\omega,v_h), 
		$$
		leading to
		\begin{align*}
		-a_h(v_h,z_h) + s(\pi_h u - u_h,v_h) &= s(\pi_h u,v_h) - s_\omega(\tilde
		U_\omega,v_h)  \\
		&=  s_\Omega(\pi_h u - u,v_h) + s_\omega(\pi_h u - \tilde U_\omega,v_h).
		\end{align*}
	\end{proof}
\end{lemma}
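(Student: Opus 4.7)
The plan is to derive both identities directly from the two equations of the discrete system \eqref{eq:model_FEM}, combined with the continuous PDE $\mathcal{L} u = f$ and the key weak-consistency property that $s_\Omega(u, v_h) = 0$ for every $u \in H^2(\Omega)$ and $v_h \in V_h$, since face jumps of $\nabla u$ vanish identically for an $H^2$ function.

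For the first identity, I would begin from the first equation of \eqref{eq:model_FEM}, namely $a_h(u_h, w_h) - s_*(z_h, w_h) = (f, w_h)_\Omega$. Because $u$ solves \eqref{eq:model_problem}, the right-hand side equals $(-\mu \Delta u + \beta \cdot \nabla u, w_h)_\Omega$, and an integration by parts on the Laplacian produces exactly $(\mu \nabla u, \nabla w_h)_\Omega - \langle \mu \nabla u \cdot n, w_h \rangle_{\partial \Omega} + (\beta \cdot \nabla u, w_h)_\Omega = a_h(u, w_h)$. The boundary term appears with precisely the same sign as in the definition of $a_h$, which is the reason $a_h$ was set up with no boundary enforcement on $u$. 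Substituting back and using linearity of $a_h$ to rewrite $a_h(u, w_h)$ as $a_h(\pi_h u, w_h) - a_h(\pi_h u - u, w_h)$ gives the first claim after moving the $s_*$ term to the left.

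For the second identity, I would take the second equation $a_h(v_h, z_h) + s(u_h, v_h) = s_\omega(\tilde U_\omega, v_h)$ and feed it into the expansion $s(\pi_h u - u_h, v_h) = s(\pi_h u, v_h) - s(u_h, v_h)$. Splitting $s = s_\Omega + s_\omega$ and applying the weak-consistency observation $s_\Omega(u, v_h) = 0$ lets me rewrite $s_\Omega(\pi_h u, v_h)$ as $s_\Omega(\pi_h u - u, v_h)$, while the two $s_\omega$-type contributions combine into $s_\omega(\pi_h u - \tilde U_\omega, v_h)$, which is exactly the stated right-hand side.

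There is no genuine obstacle here: the lemma is in essence a Galerkin-orthogonality identity and the proof reduces to a few lines of algebra on the Euler--Lagrange equations defining \eqref{eq:model_FEM}. The only point worth emphasizing is the use of $s_\Omega(u, v_h) = 0$ for $u \in H^2(\Omega)$; this is the sense in which the interior-jump stabilization is weakly consistent on the exact solution, and it is precisely the feature that will let the subsequent error analysis convert stabilization bounds into optimal-order convergence without any loss due to the penalty.
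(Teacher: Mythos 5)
Your proposal is correct and follows essentially the same route as the paper: both identities are obtained by substituting the exact PDE into the first discrete equation (with an integration by parts showing $(f,w_h)_\Omega = a_h(u,w_h)$) and by rearranging the second discrete equation using the splitting $s = s_\Omega + s_\omega$ together with $s_\Omega(u,v_h)=0$ for $u\in H^2(\Omega)$. Your explicit emphasis on the weak-consistency identity $s_\Omega(u,v_h)=0$ is a point the paper leaves implicit in this proof, but the argument is the same.
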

\begin{lemma}[Continuity]\label{lem:cont}
	Assume the low P\'eclet regime \eqref{eq:Peclet_low} and that $|\beta|_{1,\infty} \le C |\beta|$. Let $v \in H^2(\Omega)$ and $w_h \in V_h$, then
	$$
	a_h(v,w_h) \leq C \| v \|_\sharp \|(0,w_h)\|_s.
	$$
	\begin{proof}
		Writing out the terms of $a_h$ and integrating by parts in the
		advective term leads to
		$$
		a_h(v,w_h) = - (v,\beta \cdot \nabla w_h)_\Omega - (v \nabla \cdot \beta, w_h)_\Omega
		+ \left<v \beta \cdot n,w_h \right>_{\partial \Omega} + ( \mu \nabla v , \nabla w_h  )_{\Omega}-
		\left< \mu \nabla v \cdot n , w_h \right>_{\partial \Omega}.
		$$
		Using the Cauchy-Schwarz inequality and the trace inequality \eqref{eq:trace_cont} for $v$, we see that
		$$
		\left<v \beta \cdot n,w_h \right>_{\partial \Omega}+ ( \mu \nabla v, \nabla w_h  )_{\Omega}-
		\left< \mu \nabla v \cdot n, w_h \right>_{\partial \Omega} \leq C \| v \|_\sharp \|(0,w_h)\|_s.
		$$
		By the Cauchy-Schwarz inequality and a discrete Poincar\'e inequality for $w_h$, see e.g. \cite{Bre03}, we bound
		$$
		-(v \nabla \cdot \beta, w_h)_\Omega \le C |\beta|_{1,\infty} \|v\|_\Omega \|w_h\|_\Omega \le C \frac{|\beta|_{1,\infty}}{|\beta|} Pe(h)^\frac12 \|v\|_\sharp  \|(0,w_h)\|_s.
		$$ 
		Under the assumption $|\beta|_{1,\infty} \le C |\beta|$, we get
		$$
		-(v \nabla \cdot \beta, w_h)_\Omega \le C Pe(h)^\frac12 \|v\|_\sharp \|(0,w_h)\|_s.
		$$
		We bound the remaining term by
		\begin{align*}
		-(v,\beta \cdot \nabla w_h)_\Omega \le |\beta|^\frac12 h^{\frac12} \|v\|_\sharp \|\nabla w_h\|_\Omega \le C Pe(h)^\frac12 \|v\|_\sharp \|(0,w_h)\|_s.
		\end{align*}
		Finally, exploiting the low P\'eclet regime $Pe(h)<1$, we obtain the conclusion.
	\end{proof}
\end{lemma}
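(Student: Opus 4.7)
The plan is to integrate by parts in the advective part of $a_h(v,w_h)$ in order to move derivatives off $v$, which the $\|\cdot\|_\sharp$ norm only controls through $\|v\|_\Omega$, $\|\mu^{1/2}\nabla v\|_\Omega$ and $\|\mu^{1/2}h^{1/2}\nabla v\cdot n\|_{\partial\Omega}$. Since $v\in H^2(\Omega)$ and $w_h\in V_h$, the identity
$$
a_h(v,w_h) = -(v,\beta\cdot\nabla w_h)_\Omega - (v\,\nabla\cdot\beta, w_h)_\Omega + \langle v\beta\cdot n, w_h\rangle_{\partial\Omega} + (\mu\nabla v,\nabla w_h)_\Omega - \langle \mu\nabla v\cdot n, w_h\rangle_{\partial\Omega}
$$
holds, and I would then estimate the five resulting terms separately by $C\|v\|_\sharp\,\|(0,w_h)\|_s$, exploiting the $\mu\|\nabla w_h\|_\Omega^2$ bulk contribution and the weighted boundary contribution $\mu h^{-1}\|w_h\|_{L^2(\partial\Omega)}^2$ of $s_*$.

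The two diffusive terms are routine: $(\mu\nabla v,\nabla w_h)_\Omega$ pairs the $\|\mu^{1/2}\nabla v\|_\Omega$ piece of $\|v\|_\sharp$ directly with the $\mu\|\nabla w_h\|_\Omega^2$ piece of $s_*$, and the Neumann boundary term pairs $\|\mu^{1/2}h^{1/2}\nabla v\cdot n\|_{\partial\Omega}$ with the $\mu h^{-1}\|w_h\|_{L^2(\partial\Omega)}^2$ piece. For the boundary convection term $\langle v\beta\cdot n, w_h\rangle_{\partial\Omega}$ I would apply the continuous trace inequality \eqref{eq:trace_cont} to $v$, so that the $|\beta|^{1/2}h^{-1/2}\|v\|_\Omega$ and $\mu^{1/2}\|\nabla v\|_\Omega$ pieces of $\|v\|_\sharp$ appear, paired against $\mu^{1/2}h^{-1/2}\|w_h\|_{L^2(\partial\Omega)}$; the surplus $|\beta|/\mu$ factor becomes $Pe(h)$, which is absorbed by \eqref{eq:Peclet_low}.

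The two remaining terms carry the work of the hypotheses. For $-(v,\beta\cdot\nabla w_h)_\Omega$ I would redistribute the weights as
$$
(|\beta|^{1/2}h^{-1/2}v,\ |\beta|^{1/2}h^{1/2}\nabla w_h)_\Omega,
$$
so that Cauchy--Schwarz produces $\|v\|_\sharp\cdot Pe(h)^{1/2}\mu^{1/2}\|\nabla w_h\|_\Omega$, again absorbed via $Pe(h)<1$. For the lower-order divergence term $-(v\,\nabla\cdot\beta,w_h)_\Omega$ I would first bound by $|\beta|_{1,\infty}\|v\|_\Omega\|w_h\|_\Omega$ and then invoke a discrete Poincar\'e-type inequality on $V_h$ (e.g.\ \cite{Bre03}) to convert $\|w_h\|_\Omega$ into a multiple of $\mu^{-1/2}\|(0,w_h)\|_s$; distributing the weights the same way and using $|\beta|_{1,\infty}\le C|\beta|$ with $Pe(h)<1$ closes the bound.

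The main obstacle I anticipate is the weight bookkeeping in the last step: to avoid an uncontrolled constant in the divergence contribution, the factor $|\beta|_{1,\infty}/|\beta|$ that arises from combining $\|v\|_\Omega$ (which only sits inside $\|v\|_\sharp$ with a $|\beta|^{1/2}$ weight) with $\|w_h\|_\Omega$ (which only sits inside $\|(0,w_h)\|_s$ via $\mu^{1/2}$) must be kept bounded, and the remaining $(|\beta|h/\mu)^{1/2}=Pe(h)^{1/2}$ must be genuinely small. This is precisely what forces the assumption $|\beta|_{1,\infty}\le C|\beta|$ together with \eqref{eq:Peclet_low}, and is the place where one must resist the temptation to apply Cauchy--Schwarz with the ``obvious'' $\mu$ powers.
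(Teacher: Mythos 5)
Your proposal is correct and follows essentially the same route as the paper's proof: the same integration by parts into the five-term decomposition, the trace inequality \eqref{eq:trace_cont} for the boundary terms, the weight redistribution $(|\beta|^{1/2}h^{-1/2}v,\,|\beta|^{1/2}h^{1/2}\nabla w_h)_\Omega$ for the bulk convection term, and the discrete Poincar\'e inequality combined with $|\beta|_{1,\infty}\le C|\beta|$ and $Pe(h)<1$ for the divergence term. Your closing remark about the $|\beta|_{1,\infty}/|\beta|$ factor being exactly what forces the hypotheses is precisely the bookkeeping the paper performs.
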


\begin{proposition}[Convergence of regularization]\label{prop:conv_stab}
	Assume the low P\'eclet regime \eqref{eq:Peclet_low} and that $|\beta|_{1,\infty} \le C |\beta|$. Let $u \in H^2(\Omega)$ be a solution to \eqref{eq:model_problem}
	and $(u_h,z_h) \in [V_h]^2$ the solution to \eqref{eq:model_FEM}, then
	$$
	\|(\pi_h u - u_h,z_h)\|_s \le C (\mu^{\frac12} h + |\beta|^{\frac12}
	h^{\frac32}) (|u|_{H^2(\Omega)} + h^{-1} \|\delta\|_\omega).
	$$
	\begin{proof}
		Denoting $e_h = \pi_h u - u_h$, it holds by definition that
		$$
		\|(e_h,z_h)\|_s^2 = a_h(e_h,z_h)  + s_*(z_h,z_h) - a_h(e_h,z_h) + s(e_h,e_h).
		$$
		Using both claims in \cref{lem:consist} we may write
		$$
		\|(e_h,z_h)\|_s^2 = a_h(\pi_h u - u,z_h) +s_\Omega(\pi_h u -
		u,e_h)+s_\omega(\pi_h u - \tilde U_\omega,e_h).
		$$
		\cref{lem:cont} gives the bound
		$$
		a_h(\pi_h u - u,z_h) \leq C \|\pi_h u - u\|_\sharp \|(0,z_h)\|_s.
		$$
		The other terms are simply bounded using the Cauchy-Schwarz inequality as follows
		\begin{equation*}
		s_\Omega(\pi_h u -
		u,e_h)+s_\omega(\pi_h u - \tilde U_\omega,e_h)
		\leq (\|(\pi_h u -
		u,0)\|_s + (\mu^{\frac12} + |\beta|^{\frac12} h^{\frac12})\|\delta\|_\omega) \|(e_h,0)\|_s.
		\end{equation*}
		Collecting the above bounds we have
		\begin{equation*}
		\|(e_h,z_h)\|_s^2
		\leq C(\|\pi_h u - u\|_\sharp+\|(\pi_h u -
		u,0)\|_s + (\mu^{\frac12} + |\beta|^{\frac12} h^{\frac12}) \|\delta\|_\omega) \|(e_h,z_h)\|_s,
		\end{equation*}
		and the claim follows by applying the approximation \eqref{eq:approx}.
	\end{proof}
\end{proposition}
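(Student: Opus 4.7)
The plan is to control the $s$-norm of the error pair by reducing it, via the consistency identities in \cref{lem:consist}, to quantities involving only the interpolation error $\pi_h u - u$ and the data perturbation $\delta$, and then to invoke the continuity of $a_h$ (\cref{lem:cont}), Cauchy--Schwarz, and the approximation bound \eqref{eq:approx}.

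Setting $e_h := \pi_h u - u_h$, I start from the identity $\|(e_h,z_h)\|_s^2 = s(e_h,e_h) + s_*(z_h,z_h)$. The key algebraic move is to generate both $s(e_h,e_h)$ and $s_*(z_h,z_h)$ while forcing the $a_h(e_h,z_h)$ cross-terms to cancel. Taking $v_h = e_h$ in the second equation of \cref{lem:consist} and $w_h = z_h$ in the first, and adding them, gives
$$
\|(e_h,z_h)\|_s^2 = a_h(\pi_h u - u, z_h) + s_\Omega(\pi_h u - u, e_h) + s_\omega(\pi_h u - \tilde U_\omega, e_h).
$$

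I then bound each of the three right-hand side terms so that they factor through $\|(e_h,z_h)\|_s$. For the first, \cref{lem:cont} yields $a_h(\pi_h u - u, z_h) \le C\|\pi_h u - u\|_\sharp \|(0,z_h)\|_s$; this is precisely where the low P\'eclet assumption \eqref{eq:Peclet_low} and the divergence bound $|\beta|_{1,\infty}\le C|\beta|$ are essential, as they allow the convective and $\nabla\cdot\beta$ contributions to be absorbed into $\|(0,z_h)\|_s$. For the second, Cauchy--Schwarz in the $s_\Omega$ inner product gives a bound by $\|(\pi_h u - u, 0)\|_s \|(e_h, 0)\|_s$. For the third, I split $\tilde U_\omega = u|_\omega + \delta$ and use Cauchy--Schwarz in $s_\omega$, noting $s_\omega(\delta,\delta)^{1/2} \le C(\mu^{1/2} + |\beta|^{1/2} h^{1/2})\|\delta\|_\omega$.

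Collecting these estimates, dividing by $\|(e_h,z_h)\|_s$, and inserting the approximation bound \eqref{eq:approx} for $\|\pi_h u - u\|_\sharp + \|(\pi_h u - u, 0)\|_s$ yields the stated estimate, after the rearrangement $(\mu^{1/2} + |\beta|^{1/2} h^{1/2})\|\delta\|_\omega = (\mu^{1/2} h + |\beta|^{1/2} h^{3/2})\, h^{-1}\|\delta\|_\omega$. The main technical hurdle I anticipate is organising the two consistency identities so that the $a_h(e_h,z_h)$ terms cancel cleanly and the data perturbation appears with the correct scaling; once that is set up, the remaining estimates slot in directly from the earlier lemmas.
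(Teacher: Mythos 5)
Your proposal is correct and follows essentially the same route as the paper's proof: the same cancellation of the $a_h(e_h,z_h)$ cross-terms via the two consistency identities, the same use of \cref{lem:cont} for the $a_h(\pi_h u - u, z_h)$ term, the same Cauchy--Schwarz bounds for the $s_\Omega$ and $s_\omega$ terms (including the splitting $\tilde U_\omega = u|_\omega + \delta$ and the scaling $s_\omega(\delta,\delta)^{1/2}\le(\mu^{1/2}+|\beta|^{1/2}h^{1/2})\|\delta\|_\omega$), and the same final appeal to \eqref{eq:approx}. No gaps.
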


\begin{lemma}[Covergence of the convective term]\label{lem:conv_convection}
Assume the low P\'eclet regime \eqref{eq:Peclet_low} and that $|\beta|_{1,\infty} \le C |\beta|$. Let $u \in H^2(\Omega)$ be a solution to \eqref{eq:model_problem}, $(u_h,z_h) \in [V_h]^2$ the solution to \eqref{eq:model_FEM} and $w\in H^1_0(\Omega)$, then
$$
(\beta \cdot \nabla u_h, w-\pi_h w)_\Omega \le C (\mu + |\beta|) ( h \|u\|_{H^2(\Omega)} + \|\delta\|_\omega)\|w\|_{H^1(\Omega)},
$$
\begin{proof}
	Denote by $\beta_h\in [V_h]^n$ a piecewise linear approximation of $\beta$ that is $L^\infty$-stable and for which
	\begin{equation*}
	\|\beta - \beta_h\|_{0,\infty} \leq C h |\beta|_{1,\infty}, 
	\end{equation*}
	and recall the approximation estimate in \cite[Theorem 2.2]{Bur05}
	\begin{equation}\label{eq:convective_approximation}
	\inf_{x_h \in V_h} \|h^{\frac12} (\beta_h \cdot \nabla u_h - x_h)\|_\Omega
	\leq C \left(\sum_{F \in \mathcal{F}_i} \|h \jump{\beta_h \cdot \nabla u_h}\|_{F}^2\right)^{\frac12}
	\leq C |\beta|^\frac12 s_\Omega(u_h,u_h)^{\frac12}.
	\end{equation}
	We also use \cref{prop:conv_stab} and the jump inequality \eqref{jumpineq} to estimate
	\begin{align*}
	s_{\Omega}(u_h,u_h)^{\frac12} &\le s_{\Omega}(u_h-\pi_h u,u_h-\pi_h u)^{\frac12} + s_{\Omega}(\pi_h u,\pi_h u)^{\frac12}
	\\&
	\le C (\mu^{\frac12} h + |\beta|^{\frac12}
	h^{\frac32}) (|u|_{H^2(\Omega)} + h^{-1} \|\delta\|_\omega)
	+ C (\mu^\frac12 + |\beta|^\frac12 h^\frac12) h|u|_{H^2(\Omega)},
	\end{align*}
	obtaining
	\begin{align}\label{eq:conv_jump}
	s_{\Omega}(u_h,u_h)^{\frac12}
	\le C (\mu^{\frac12} h + |\beta|^{\frac12}
	h^{\frac32}) (|u|_{H^2(\Omega)} + h^{-1} \|\delta\|_\omega).
	\end{align}
	We now write
	$$
	(\beta \cdot \nabla u_h, w-\pi_h w)_\Omega  = (\beta_h \cdot \nabla u_h, w-\pi_h w)_\Omega  + ((\beta-\beta_h) \cdot \nabla u_h, w-\pi_h w)_\Omega,
	$$
	and using orthogonality, \eqref{eq:convective_approximation}, \eqref{eq:conv_jump}, interpolation and \eqref{eq:Peclet_low}, we bound the first term by
	\begin{align*}
	(\beta_h \cdot \nabla u_h, w-\pi_h w)_\Omega &\le C |\beta|^\frac12 h^{-\frac12} s_\Omega(u_h,u_h)^{\frac12} h \|w\|_{H^1(\Omega)}
	\\&\le C |\beta|^\frac12 h^\frac12 (\mu^{\frac12} + |\beta|^{\frac12}
	h^{\frac12}) (h|u|_{H^2(\Omega)} + \|\delta\|_\omega) \|w\|_{H^1(\Omega)}
	\\&\le C (\mu + |\beta| h) (h|u|_{H^2(\Omega)} + \|\delta\|_\omega) \|w\|_{H^1(\Omega)}.
	\end{align*}
	We now use the Poincar\'e-type inequality \eqref{eq:Poincare} and interpolation to bound the second term
	\begin{align*}
	((\beta-\beta_h) \cdot \nabla u_h, w-\pi_h w)_\Omega &\le C h^2 |\beta|_{1,\infty} \|\nabla u_h\|_\Omega \|w\|_{H^1(\Omega)}
	\\&\le  C h |\beta|_{1,\infty} (\mu^\frac12 + |\beta|^\frac12 h^\frac12)^{-1} s(u_h,u_h)^{\frac12} \|w\|_{H^1(\Omega)}
	\\&\le C h |\beta|_{1,\infty} ( h |u|_{H^2(\Omega)} + \|u\|_\Omega + \|\delta\|_\omega ) \|w\|_{H^1(\Omega)}
	\\&\le C h |\beta|_{1,\infty} ( \|u\|_{H^2(\Omega)} + \|\delta\|_\omega ) \|w\|_{H^1(\Omega)}
	\end{align*}
	since due to \cref{prop:conv_stab} and inequality \eqref{jumpineq}
	\begin{align*}
	s(u_h,u_h)^\frac12 &\le s(u_h-\pi_h u,u_h-\pi_h u)^\frac12 + s_\Omega(\pi_h u,\pi_h u)^{\frac12} + s_\omega(\pi_h u,\pi_h u)^{\frac12}
	\\&\le C (\mu^{\frac12} + |\beta|^{\frac12}
	h^{\frac12}) ( h |u|_{H^2(\Omega)} + \|\delta\|_\omega + \|u\|_\Omega ).
	\end{align*}
	Under the assumption $|\beta|_{1,\infty} \le C |\beta|$, we collect the above bounds to get
	$$
	(\beta \cdot \nabla u_h, w-\pi_h w)_\Omega \le C (\mu + |\beta|) ( h \|u\|_{H^2(\Omega)} + \|\delta\|_\omega)\|w\|_{H^1(\Omega)}.
	$$
\end{proof}
\end{lemma}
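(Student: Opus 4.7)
My plan is to reduce the integral to quantities that are already controlled by the stabilization. Because $u_h$ is only piecewise linear, $\beta\cdot\nabla u_h$ fails to be globally smooth; the cleanest way to extract the information that \emph{is} available is to first replace $\beta$ by a piecewise linear $L^\infty$-stable interpolant $\beta_h\in[V_h]^n$ satisfying $\|\beta-\beta_h\|_{0,\infty}\le Ch|\beta|_{1,\infty}$, and split
\[
(\beta\cdot\nabla u_h,w-\pi_h w)_\Omega = (\beta_h\cdot\nabla u_h,w-\pi_h w)_\Omega + ((\beta-\beta_h)\cdot\nabla u_h,w-\pi_h w)_\Omega =: I_1+I_2.
\]
The point is that $\beta_h\cdot\nabla u_h$ is piecewise polynomial, so its interelement jumps are directly controlled by $s_\Omega(u_h,u_h)^{1/2}$.

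For $I_1$ I would exploit the $L^2$-orthogonality of $w-\pi_h w$ to $V_h$ and subtract any $x_h\in V_h$, then apply Cauchy--Schwarz to separate an $h^{-1/2}$-weighted factor in $w-\pi_h w$ (bounded by $Ch^{1/2}\|w\|_{H^1(\Omega)}$ by interpolation) from an $h^{1/2}$-weighted factor in $\beta_h\cdot\nabla u_h-x_h$. The latter infimum is exactly the quantity estimated in \cite[Theorem 2.2]{Bur05} in terms of the jumps of $\beta_h\cdot\nabla u_h$ across interior faces, which in turn is controlled by $|\beta|^{1/2}s_\Omega(u_h,u_h)^{1/2}$.

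For $I_2$ the direct route is Cauchy--Schwarz with the $L^\infty$-bound on $\beta-\beta_h$ and interpolation of $w$, giving $|I_2|\le Ch^2|\beta|_{1,\infty}\|\nabla u_h\|_\Omega\|w\|_{H^1(\Omega)}$, after which I would convert $\|\nabla u_h\|_\Omega$ into $C(\mu^{1/2}h+|\beta|^{1/2}h^{3/2})^{-1}s(u_h,u_h)^{1/2}$ via the discrete Poincar\'e-type inequality \eqref{eq:Poincare}.

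The hard part will be estimating $s_\Omega(u_h,u_h)^{1/2}$ and $s(u_h,u_h)^{1/2}$ without an a priori bound on $u_h$. The trick is to insert $\pi_h u$ by the triangle inequality: the error $u_h-\pi_h u$ is bounded by \cref{prop:conv_stab}, while $\pi_h u$ itself is handled by the jump estimate \eqref{jumpineq} together with an elementary approximation bound on $s_\omega(\pi_h u,\pi_h u)$. This should yield
\[
s_\Omega(u_h,u_h)^{1/2}\le C(\mu^{1/2}h+|\beta|^{1/2}h^{3/2})(|u|_{H^2(\Omega)}+h^{-1}\|\delta\|_\omega),
\]
together with a companion bound on $s(u_h,u_h)^{1/2}$ that picks up an additional $\|u\|_\Omega$ term. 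Substituting these back into $I_1$ and $I_2$, collecting the common scalings, and invoking $Pe(h)<1$ together with $|\beta|_{1,\infty}\le C|\beta|$ to absorb the residual $h^{1/2}$ factors into the prefactor $\mu+|\beta|$, I expect to recover the stated bound.
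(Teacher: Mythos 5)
Your proposal follows essentially the same route as the paper's proof: the same splitting via the $L^\infty$-stable interpolant $\beta_h$, the same use of orthogonality plus \cite[Theorem 2.2]{Bur05} for the $\beta_h$ term, the same Poincar\'e-type conversion \eqref{eq:Poincare} for the remainder, and the same triangle-inequality trick with $\pi_h u$, \cref{prop:conv_stab} and \eqref{jumpineq} to control $s_\Omega(u_h,u_h)$ and $s(u_h,u_h)$. The argument is correct as outlined.
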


We now combine these results with the conditional stability estimates from \cref{sec:stability_estimates} to obtain error bounds for the discrete solution. For this purpose, we consider an open bounded set $B\subset \Omega$ that contains the data region $\omega$ such that $B\setminus \omega$ does not touch the boundary of $\Omega$. Then the estimates in \cref{lem:shifted3b} and \cref{cor:3_balls_impr} hold true by a covering argument, see e.g. \cite{MV12}, and we obtain local error estimates in $B$. For global unique continuation from $\omega$ to the entire $\Omega$, however, the stability deteriorates and it is of a different nature: the modulus of continuity for the given data is not of H\"older type $|\cdot|^\kappa$ any more, but of a logarithmic kind $|\log(\cdot)|^{-\kappa}$.

\begin{theorem}[$L^2$-error estimate]\label{L^2-error}
	Assume the low P\'eclet regime \eqref{eq:Peclet_low} and that $|\beta|_{1,\infty} \le C |\beta|$. Consider $\omega \subset B \subset \Omega$ such that $\overline{B\setminus \omega} \subset \Omega$. Let $u \in H^2(\Omega)$ be a solution to \eqref{eq:model_problem} and $(u_h,z_h) \in [V_h]^2$ the solution to \eqref{eq:model_FEM}, then there is $\kappa\in(0,1)$ such that
	\begin{equation*}
	\|u-u_h\|_{L^2(B)} \le C h^{\kappa} e^{C \tilde{Pe}^2} (\|u\|_{H^2(\Omega)} + h^{-1} \|\delta\|_\omega),
	\end{equation*}
	where $\tilde{Pe} = 1+|\beta|/\mu$.
	\begin{proof}
		Let us consider the residual defined by $\left< r,w \right> = a_h(u_h,w)-\left<f,w \right>$, for $w\in H^1_0(\Omega)$. Using \eqref{eq:model_FEM} we obtain
		\begin{align*}
		\left< r,w \right> &= a_h(u_h,w-\pi_h w) - \left< f,w-\pi_h w \right> + a_h(u_h,\pi_h w) - \left< f,\pi_h w \right>
		\\&= a_h(u_h,w-\pi_h w) - \left< f,w-\pi_h w \right> + s_*(z_h,\pi_h w).
		\end{align*}
		We split the first term in the right-hand side into convective and non-convective terms, and for the latter we integrate by parts on each element $K$ and use Cauchy-Schwarz followed by the trace inequality \eqref{eq:trace_cont} to get
		\begin{align*}
		&( \mu \nabla u_h , \nabla (w-\pi_h w)  )_{\Omega}-
		\left< \mu \nabla u_h \cdot n , w-\pi_h w \right>_{\partial \Omega} = \sum_{F \in \mathcal{F}_i} \int_{F}\mu\jump{\nabla u_h \cdot n}_F (w-\pi_h w) \,\mathrm{d}s
		\\ &\le C \mu (\mu + |\beta|h)^{-\frac12} s_\Omega(u_h,u_h)^{\frac12} (h^{-1} \|w-\pi_h w\|_{L^2(\Omega)} + \|w-\pi_h w\|_{H^1(\Omega)}).
		\end{align*}
		Using \eqref{eq:conv_jump} and interpolation we obtain
		\begin{align*}
		( \mu \nabla u_h , \nabla (w-\pi_h w)  )_{\Omega}-\left< \mu \nabla u_h \cdot n , w-\pi_h w \right>_{\partial \Omega}
		\le C \mu ( h |u|_{H^2(\Omega)} + \|\delta\|_\omega ) \|w\|_{H^1(\Omega)}.
		\end{align*}
		We bound the convective term in $a_h(u_h,w-\pi_h w)$ by \cref{lem:conv_convection}, hence obtaining
		\begin{equation*}
		a_h(u_h,w-\pi_h w) \le C (\mu + |\beta|) ( h \|u\|_{H^2(\Omega)} + \|\delta\|_\omega) \|w\|_{H^1(\Omega)}.
		\end{equation*}
		The next term in the residual is bounded by
		$$
		\left< f,w-\pi_h w \right> \le \| f \|_{L^2(\Omega)} \| w-\pi_h w \|_{L^2(\Omega)} \le C h \| f \|_{L^2(\Omega)} \| w \|_{H^1(\Omega)}.
		$$
		The last term left to bound from the residual is
		\begin{align*}
		s_*(z_h,\pi_h w) \le \| (0,z_h) \|_s \| (0, \pi_h w)\|_s,
		\end{align*}
		and using \eqref{eq:trace_grad} for the jump term, together with the $H^1$-stability of $\pi_h$, we see that
		\begin{align*}
		\| (0, \pi_h w)\|_s &\le C( \mu^\frac12 \|\nabla(\pi_h w)\|_\Omega + (\mu^{\frac12} + |\beta|^{\frac12} h^{\frac12}) \|\nabla(\pi_h w)\|_\Omega + (\mu h^{-1} + |\beta|)^\frac12 \|\pi_h w\|_{\partial \Omega} )
		\\
		&\le C (\mu^{\frac12} + |\beta|^{\frac12} h^{\frac12} ) \|w\|_{H^1(\Omega)},
		\end{align*}
		where for the boundary term we used that $w|_{\partial\Omega} = 0$ together with interpolation and \eqref{eq:trace_cont}. Bounding $\| (0,z_h) \|_s$ by \cref{prop:conv_stab}, we get
		\begin{align*}
		s_*(z_h,\pi_h w) \le C (\mu + |\beta|h) ( h |u|_{H^2(\Omega)} + \|\delta\|_\omega ) \|w\|_{H^1(\Omega)}.
		\end{align*}
		Collecting the above estimates we bound the residual norm by
		\begin{align*}
		\|r\|_{H^{-1}(\Omega)} &\le C (\mu + |\beta|)(h \|u\|_{H^2(\Omega)} + \|\delta\|_\omega) + C h \| f \|_{L^2(\Omega)}
		\\&\le C (\mu + |\beta|) (h \|u\|_{H^2(\Omega)} + \|\delta\|_\omega).
		\end{align*}
		We now use the stability estimate in \cref{lem:shifted3b} to write
		\begin{equation*}
		\|u-u_h\|_{L^2(B)} \le C e^{C \tilde{Pe}^2} \left( \|u-u_h\|_{L^2(\omega)} + \frac{1}{\mu} \|r\|_{H^{-1}(\Omega)} \right)^{\kappa} \|u-u_h\|_{L^2(\Omega)}^{1-\kappa}.
		\end{equation*}
		By \cref{prop:conv_stab} we have
		\begin{align*}
		\|u-u_h\|_{L^2(\omega)} &\le \|u-\pi_h u\|_{L^2(\omega)} + \|u_h - \pi_h u\|_{L^2(\omega)}
		\\&\le C h^2 |u|_{H^2(\Omega)} + C h |u|_{H^2(\Omega)} + C \|\delta\|_\omega.
		\\&\le C (h |u|_{H^2(\Omega)} + \|\delta\|_\omega).
		\end{align*}
		Using \eqref{eq:Poincare} and \cref{prop:conv_stab} again, we bound
		\begin{align*}
		\|u-u_h\|_{L^2(\Omega)} &\le \|u-\pi_h u\|_{L^2(\Omega)} + \|u_h-\pi_h u\|_{L^2(\Omega)}
		\\ &\le  C h^2 |u|_{H^2(\Omega)} + C (\mu^{\frac12} h + |\beta|^{\frac12} h^{\frac32})^{-1} s(u_h-\pi_h u,u_h-\pi_h u)^{\frac12}
		\\ &\le C (|u|_{H^2(\Omega)} + h^{-1}\|\delta\|_\omega).
		\end{align*}
		Hence we conclude by
		\begin{align*}
		\|u-u_h\|_{L^2(B)} &\le C e^{C \tilde{Pe}^2} \left( h \|u\|_{H^2(\Omega)} + \|\delta\|_\omega \right)^{\kappa} \left( |u|_{H^2(\Omega)} + h^{-1} \|\delta\|_\omega \right)^{1-\kappa}
		\\ &\le C e^{C \tilde{Pe}^2} h^{\kappa} (\|u\|_{H^2(\Omega)} + h^{-1} \|\delta\|_\omega),
		\end{align*}
		where we have absorbed the $\tilde{Pe} = 1 + |\beta|/\mu$ factor into the exponential factor $e^{C\tilde{Pe}^2}$.
	\end{proof}
\end{theorem}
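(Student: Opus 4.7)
The plan is to invoke the conditional stability estimate of \cref{lem:shifted3b} on the error $u-u_h$, upgraded from two concentric balls to the configuration $\omega \subset B \subset \Omega$ by a standard chain/covering argument (see \cite{MV12}). This reduces the task to three a posteriori bounds: the data-fit norm $\|u-u_h\|_{L^2(\omega)}$, the residual $\|\mathcal{L}(u-u_h)\|_{H^{-1}(\Omega)}$, and the coarse global norm $\|u-u_h\|_{L^2(\Omega)}$. The two $L^2$-bounds are routine: split $u-u_h = (u-\pi_h u) + (\pi_h u - u_h)$, control the first summand by standard approximation, and the second by extracting $L^2$-information from the stabilization norm via the discrete Poincar\'e-type inequality \eqref{eq:Poincare}. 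Applying \cref{prop:conv_stab} to the $s$-norm then yields $\|u-u_h\|_{L^2(\omega)} \le C(h|u|_{H^2(\Omega)} + \|\delta\|_\omega)$ and $\|u-u_h\|_{L^2(\Omega)} \le C(|u|_{H^2(\Omega)} + h^{-1}\|\delta\|_\omega)$; the $h^{-1}$-loss in the latter reflects the ill-posed nature of the problem.

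The main work is bounding the $H^{-1}$-residual at the optimal rate. I would define it by duality, $\langle r,w\rangle := a_h(u_h,w) - (f,w)_\Omega$ for $w \in H^1_0(\Omega)$, and split $w = \pi_h w + (w - \pi_h w)$. Using the first equation of \eqref{eq:model_FEM}, the $\pi_h w$ part collapses to $s_*(z_h, \pi_h w)$, which is handled by Cauchy--Schwarz in the $s_*$-norm together with \cref{prop:conv_stab} and the $H^1$-stability of $\pi_h$ (the boundary piece in $s_*$ uses $w|_{\partial\Omega}=0$ plus interpolation and the trace inequality \eqref{eq:trace_cont}). For the $(w-\pi_h w)$ part I would integrate by parts element-wise in the diffusive term; the resulting interior-face jumps $\jump{\nabla u_h \cdot n}$ are controlled by $s_\Omega(u_h,u_h)^{1/2}$, and an auxiliary bound on this jump quantity at the right rate is obtained from a triangle inequality with $\pi_h u$, combining \cref{prop:conv_stab} with the jump inequality \eqref{jumpineq} (this is exactly \eqref{eq:conv_jump}). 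The remaining convective contribution $(\beta \cdot \nabla u_h, w - \pi_h w)_\Omega$ is precisely what \cref{lem:conv_convection} was tailored to bound. Collecting these gives $\|r\|_{H^{-1}(\Omega)} \le C(\mu + |\beta|)(h\|u\|_{H^2(\Omega)} + \|\delta\|_\omega)$; this is the step I expect to be the main obstacle, as it requires delicate balancing of the diffusive jump contribution against the convective consistency error.

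Finally, inserting the three estimates into the shifted three-ball inequality gives $\|u-u_h\|_{L^2(B)} \le C e^{C\tilde{Pe}^2} a^\kappa b^{1-\kappa}$, where $a = h\|u\|_{H^2(\Omega)} + \|\delta\|_\omega$ and $b = |u|_{H^2(\Omega)} + h^{-1}\|\delta\|_\omega$. Since $a \le h M$ with $M := \|u\|_{H^2(\Omega)} + h^{-1}\|\delta\|_\omega$, and $b \le M$, one has $a^\kappa b^{1-\kappa} \le h^\kappa M$, which is exactly the claimed rate. The polynomial $\tilde{Pe}$-factors picked up along the way --- namely the $\mu^{-1}$ prefactor of the residual in \cref{lem:shifted3b} and the $(\mu+|\beta|)$ appearing in the residual estimate --- are absorbed into the exponential $e^{C\tilde{Pe}^2}$ at the end, using that polynomial growth in $\tilde{Pe}$ is dominated by exponential growth.
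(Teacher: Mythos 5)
Your proposal is correct and follows essentially the same route as the paper: the same residual decomposition $w = \pi_h w + (w-\pi_h w)$ using the first equation of \eqref{eq:model_FEM}, the same treatment of the diffusive jumps via \eqref{eq:conv_jump}, the convective term via \cref{lem:conv_convection}, the $L^2$-bounds via \cref{prop:conv_stab} and \eqref{eq:Poincare}, and the final insertion into \cref{lem:shifted3b}. The only detail left implicit is the routine bound $\left< f, w-\pi_h w\right> \le C h \|f\|_{L^2(\Omega)}\|w\|_{H^1(\Omega)}$, which is absorbed into your stated residual estimate since $\|f\|_{L^2(\Omega)} \le (\mu+|\beta|)\|u\|_{H^2(\Omega)}$.
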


\begin{remark}
	Let us briefly discuss the effect of decreasing the size of the data region $\omega$ by considering the case of balls, that is $\omega = B(x_0, r_1)$ and $B = B(x_0, r_2)$, with $x_0\in \Omega$ and $r_1<r_2$. Notice from \cref{remark-kappa} that the exponent $\kappa$ is an increasing function of the radius $r_1$ and that decreasing the size of the data region $\omega$ implies that the convergence rate $h^\kappa$ decreases as well. Bounding the radius $r_2$ away from zero and letting $r_1\to 0$ implies that the exponent $\kappa\to 0$.
	The continuum three-ball inequality then becomes the trivial inequality $\|u\|_{L^2(B)} \le \|u\|_{L^2(\Omega)}$ and the method does not converge any more.
\end{remark}

\begin{theorem}[$H^1$-error estimate]\label{H^1-error}
	Assume the low P\'eclet regime \eqref{eq:Peclet_low} and that $|\beta|_{1,\infty} \le C |\beta|$ and $\esssup_{\Omega} \grad \cdot \beta \leq 0$. Consider $\omega \subset B \subset \Omega$ such that $\overline{B\setminus \omega} \subset \Omega$. Let $u \in H^2(\Omega)$ be a solution to \eqref{eq:model_problem}, and $(u_h,z_h) \in [V_h]^2$ the solution to \eqref{eq:model_FEM}, then there is $\kappa\in(0,1)$ such that
	\begin{equation*}
	\|u-u_h\|_{H^1(B)} \le C h^{\kappa} e^{C \tilde{Pe}^2} (\|u\|_{H^2(\Omega)} + h^{-1} \|\delta\|_\omega),
	\end{equation*}
	where $\tilde{Pe} = 1+|\beta|/\mu$.
	\begin{proof}
		Letting $e_h = u-u_h$, we combine the proof of \cref{L^2-error} with the stability estimate in \cref{cor:3_balls_impr} to obtain
		\begin{align*}
		\norm{e_h}_{H^1(B)} &\le C e^{C \tilde{Pe}^2} \left( \norm{e_h}_{L^2(\omega)} + \frac{1}{\mu} \norm{r}_{H^{-1}(\Omega)}\right)^\kappa
		\left( \norm{e_h}_{L^2(\Omega)} + \frac{1}{\mu} \norm{r}_{H^{-1}(\Omega)} \right)^{1-\kappa}
		\\&\le C e^{C \tilde{Pe}^2} h^{\kappa} (\|u\|_{H^2(\Omega)} + h^{-1} \|\delta\|_\omega).
		\end{align*}
	\end{proof}
\end{theorem}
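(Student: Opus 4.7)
The plan is to essentially repeat the strategy used for \cref{L^2-error}, but swap out the shifted three-ball inequality of \cref{lem:shifted3b} for the improved one in \cref{cor:3_balls_impr}, which delivers an $H^1$-norm on the left at the cost of requiring the sign condition $\esssup_\Omega \nabla\cdot\beta \le 0$. Concretely, I set $e_h = u - u_h$ and define the PDE residual $\langle r, w\rangle := a_h(u_h, w) - \langle f, w\rangle$ for $w \in H^1_0(\Omega)$ exactly as in the proof of \cref{L^2-error}.

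The first step is to recycle the residual bound
$$
\|r\|_{H^{-1}(\Omega)} \le C(\mu + |\beta|)\bigl(h\|u\|_{H^2(\Omega)} + \|\delta\|_\omega\bigr),
$$
which was established in the $L^2$-proof via \cref{lem:conv_convection}, interpolation of $w - \pi_h w$ on interior faces, and the bound on $\|(0,z_h)\|_s$ from \cref{prop:conv_stab}. Nothing in that derivation used the sign of $\nabla \cdot \beta$, so it applies verbatim. Likewise, the two auxiliary bounds
$$
\|e_h\|_{L^2(\omega)} \le C\bigl(h|u|_{H^2(\Omega)} + \|\delta\|_\omega\bigr), \qquad \|e_h\|_{L^2(\Omega)} \le C\bigl(|u|_{H^2(\Omega)} + h^{-1}\|\delta\|_\omega\bigr),
$$
follow from the triangle inequality, standard interpolation, \eqref{eq:Poincare}, and \cref{prop:conv_stab}, exactly as in \cref{L^2-error}.

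The second step is to invoke \cref{cor:3_balls_impr} for $e_h$ on the pair $\omega \subset B$ (valid by a covering argument since $\overline{B\setminus \omega} \subset \Omega$). Because $\mathcal{L} e_h = -r$ in the distributional sense, this yields
$$
\|e_h\|_{H^1(B)} \le C e^{C\tilde{Pe}^2}\Bigl(\|e_h\|_{L^2(\omega)} + \tfrac{1}{\mu}\|r\|_{H^{-1}(\Omega)}\Bigr)^\kappa \Bigl(\|e_h\|_{L^2(\Omega)} + \tfrac{1}{\mu}\|r\|_{H^{-1}(\Omega)}\Bigr)^{1-\kappa}.
$$
This is where the assumption $\esssup_\Omega \nabla \cdot \beta \le 0$ enters, since it is precisely what is needed to apply the improved three-ball inequality. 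Inserting the three bounds from the previous paragraph, and absorbing the $\tilde{Pe} = 1 + |\beta|/\mu$ prefactor into $e^{C\tilde{Pe}^2}$, yields the claimed
$$
\|u - u_h\|_{H^1(B)} \le C h^\kappa e^{C\tilde{Pe}^2}\bigl(\|u\|_{H^2(\Omega)} + h^{-1}\|\delta\|_\omega\bigr).
$$

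I do not expect a real obstacle: all the nontrivial work (controlling the residual by the stabilization, controlling the stabilization by the data, and establishing the relevant conditional stability estimate with $H^1$ on the left) has been done in the preceding lemmas and propositions. The only conceptual point worth flagging is that the divergence sign condition is not an artifact of the discrete analysis but enters through \cref{cor:3_balls_impr}, whose proof uses well-posedness of the Dirichlet problem $\mathcal L w = \mathcal L u$ on an intermediate ball; this is why the $H^1$-estimate needs a genuinely stronger hypothesis than the $L^2$-estimate while retaining the same convergence rate $h^\kappa$.
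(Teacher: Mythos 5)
Your proposal is correct and follows essentially the same route as the paper: the published proof likewise reuses the residual bound and the $L^2$-bounds on $e_h$ from \cref{L^2-error} and simply replaces \cref{lem:shifted3b} by \cref{cor:3_balls_impr} to get the $H^1$-norm on the left. Your remark that the divergence sign condition enters only through the continuous stability estimate, not the discrete analysis, is also the correct reading of why the hypothesis is strengthened here.
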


\section{Numerical experiments}
We illustrate the theoretical results with some numerical examples. The implementation of the stabilized FEM \eqref{eq:model_FEM} has been carried out in FreeFem++ \cite{Hec12} on uniform triangulations with alternating left and right diagonals. The mesh size is taken as the inverse square root of the number of nodes. The parameters in $s_\Omega$ and $s_*$ are set to $\gamma = 10^{-5}$ and $\gamma_* = 1$. We also rescale the boundary term in $s_*$ by the factor 50, drawing on results from different numerical experiments. In this section we denote $e_h = \pi_h u - u_h$.

We consider $\Omega$ to be the unit square and the exact solution with global unit $L^2$-norm
\begin{equation*}\label{eq:exact_sol}
u(x,y) = 30x(1-x)y(1-y).
\end{equation*}
We take the diffusion coefficient $\mu = 1$ and investigate two cases for the convection field: the coercive case of the constant field
$$\beta_c=(1,0),$$
and the case
$$\beta_{nc} = 100 (x+y, y-x),$$
plotted in \cref{fig:beta-k2}, for which $\nabla \cdot \beta = 200$ and $\|\beta\|_{0,\,\infty} = 200$. This makes the (well-posed) problem strongly non-coercive with a medium high P\'eclet number. The latter example was also considered in \cite{Bur13} for numerical experiments on a non-coercive convection--diffusion equation with Cauchy data.

We consider the following domains for data assimilation, shown in \cref{fig:domains},
\begin{equation}\label{ex1}
\omega = (0.2,0.45)\times (0.2,0.45),\quad
B =  (0.2,0.45)\times (0.55,0.8),
\end{equation}
\begin{equation}\label{ex2}
\omega = (0,0.125)\times (0.4,0.6) \cup (0.875,1)\times (0.4,0.6),\quad
B =  (0.25,0.75)\times (0.4,0.6),
\end{equation}
\begin{equation}\label{ex3}
\omega = \Omega\setminus [0,0.875]\times [0.125,0.875],\quad
B =  \Omega\setminus [0,0.125]\times [0.125,0.875].
\end{equation}

\begin{figure}[h]
	\begin{subfigure}{0.32\textwidth}
		\includegraphics[draft=false, width=\textwidth]{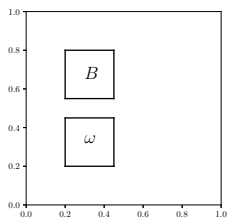}
		\caption{Boundaries for \eqref{ex1}.}
	\end{subfigure}
	\begin{subfigure}{0.32\textwidth}
		\includegraphics[draft=false, width=\textwidth]{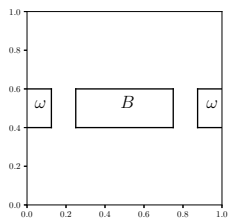}
		\caption{Boundaries for \eqref{ex2}.}
	\end{subfigure}
	\hfill
	\begin{subfigure}{0.32\textwidth}
		\includegraphics[draft=false, width=\textwidth]{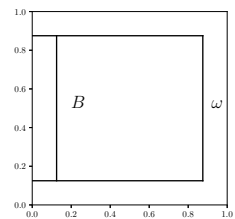}
		\caption{Boundaries for \eqref{ex3}.}
	\end{subfigure}
	\caption{Computational domains.}
	\label{fig:domains}
\end{figure}
 
\begin{figure}[h]
	\begin{subfigure}{0.48\textwidth}
		\includegraphics[width=\textwidth]{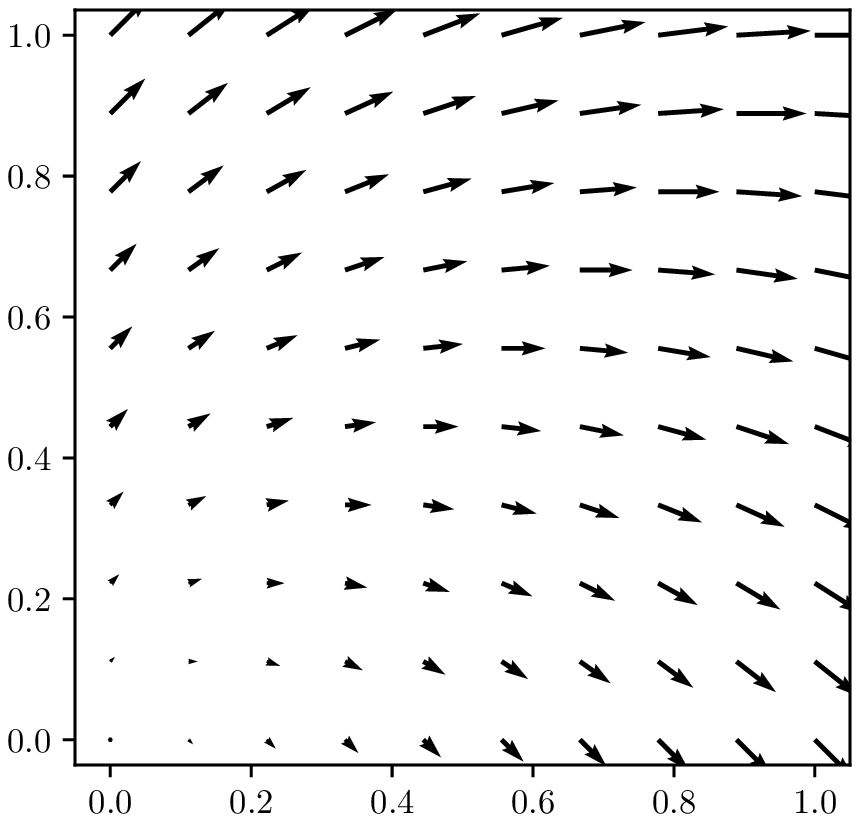}
	\end{subfigure}
	\begin{subfigure}{0.48\textwidth}
		\includegraphics[width=\textwidth]{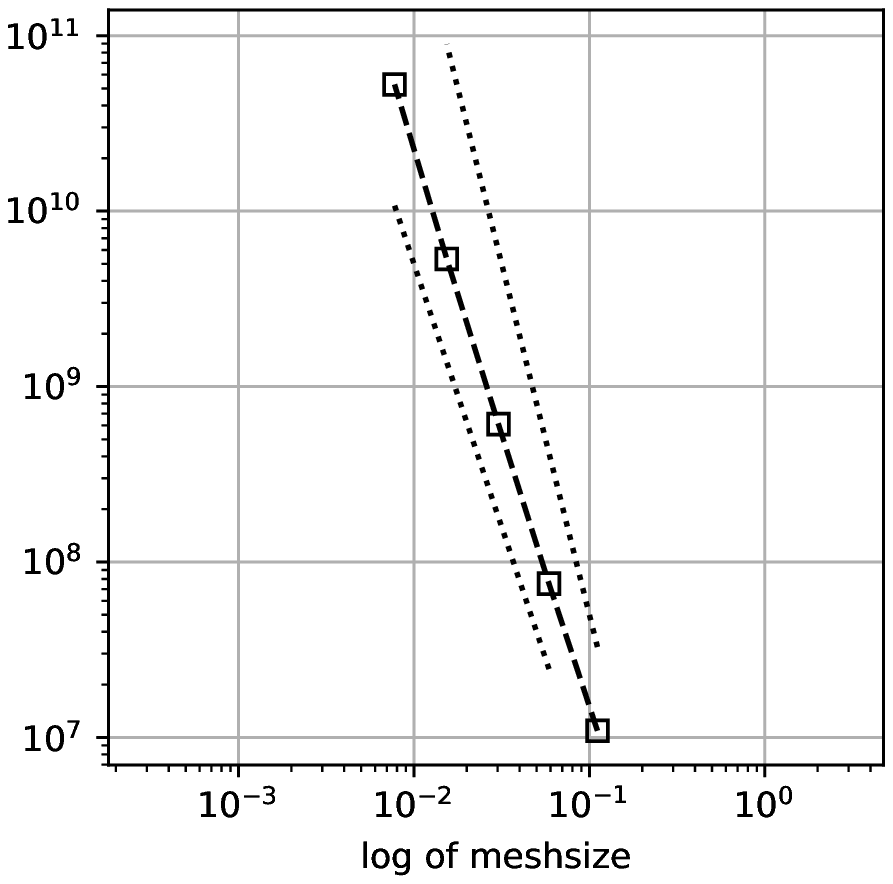}
	\end{subfigure}
	\caption{Left: convection field $\beta_{nc}$. Right: condition number $\mathcal{K}_2$ for domains \eqref{ex1}, $\beta=\beta_c$; the dotted lines are proportional to $h^{-3}$ and $h^{-4}$.}
	\label{fig:beta-k2}
\end{figure}

The condition number upper bound in \cref{prop:unique} is illustrated for a particular case in \cref{fig:beta-k2}, where we plot the condition number $\mathcal{K}_2$ versus the mesh size $h$, together with reference dotted lines proportional to $h^{-3}$ and $h^{-4}$. For five meshes with $2^N$ elements on each side, $N=3,\ldots,7$, the approximate rates for $\mathcal{K}_2$ are -3.03, -3.16, -3.2, -3.34.

\begin{figure}[h]
\begin{subfigure}{0.48\textwidth}
	\includegraphics[draft=false, width=\textwidth]{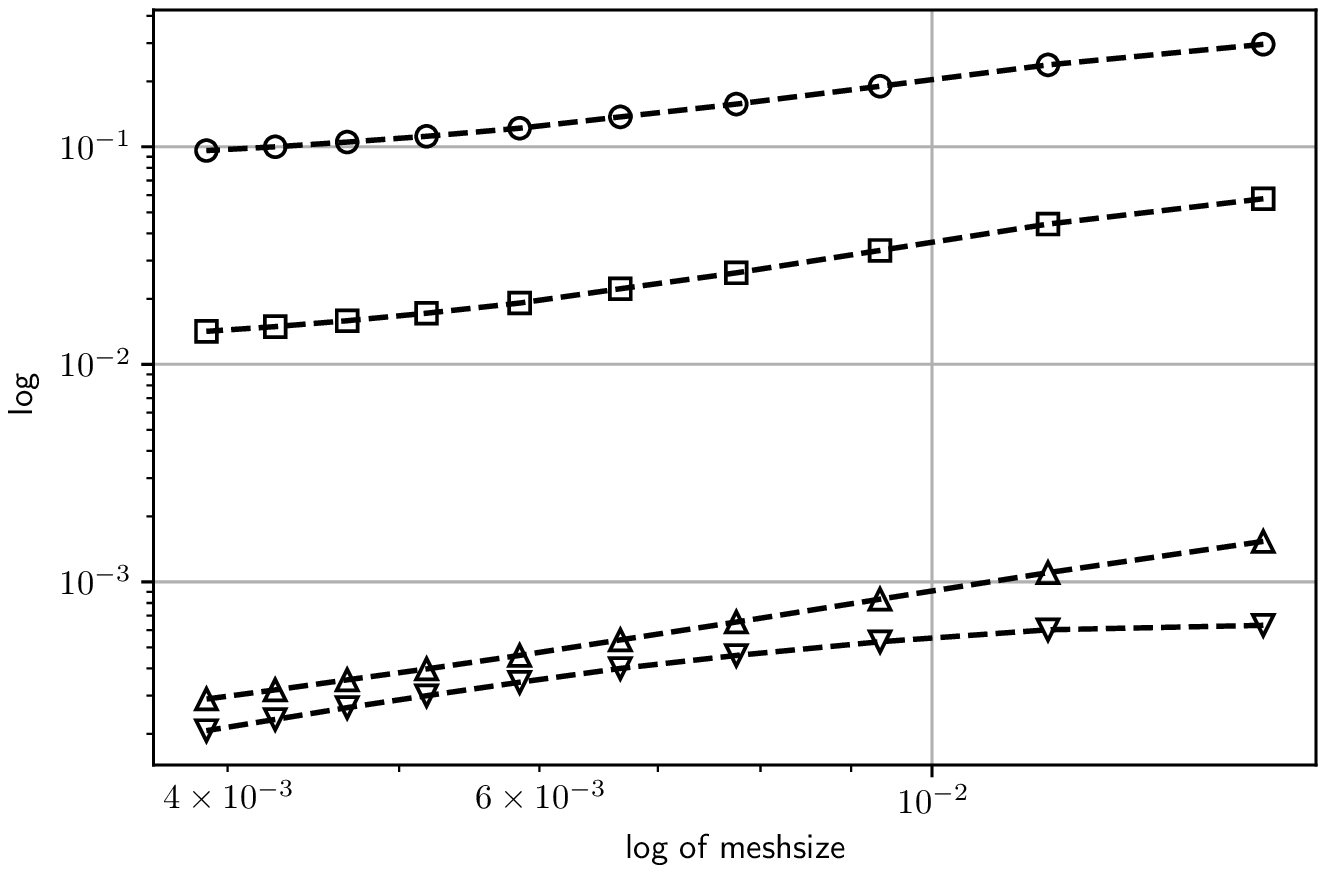}
	\caption{Circles: $H^1$-error, rate $\approx 0.45$; Squares: $L^2$-error, rate $\approx 0.56$; Up triangles: $s(e_h,e_h)^\frac12$, rate $\approx 1.1$; Down triangles: $s_*(z_h,z_h)^\frac12$, rate $\approx 1.33$.}
\end{subfigure}
\hfill
\begin{subfigure}{0.49\textwidth}
	\includegraphics[draft=false, width=\textwidth]{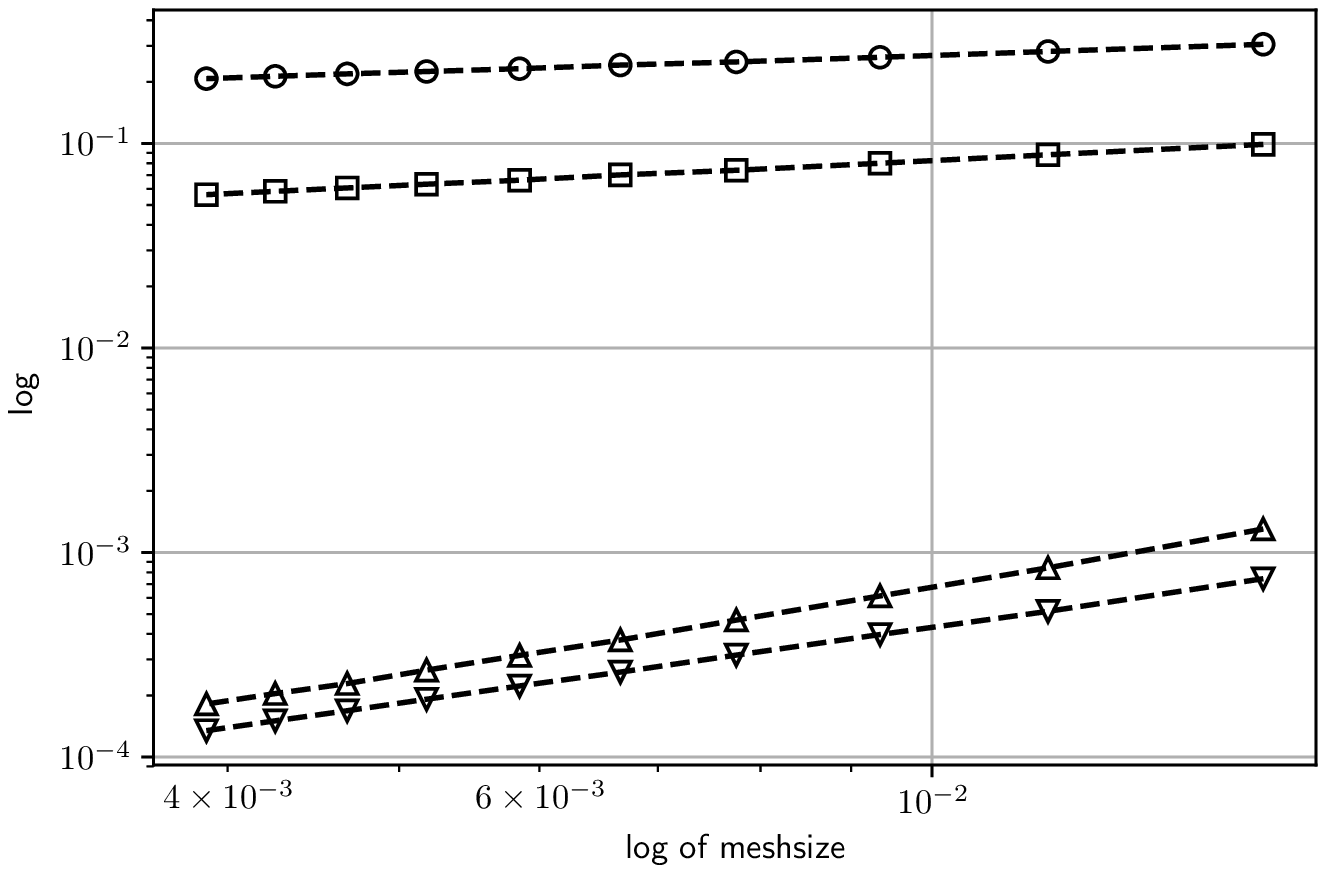}
	\caption{Circles: $H^1$-error, rate $\approx 0.29$; Squares: $L^2$-error, rate $\approx 0.42$; Up triangles: $s(e_h,e_h)^\frac12$, rate $\approx 1.32$; Down triangles: $s_*(z_h,z_h)^\frac12$, rate $\approx 1.34$.}
\end{subfigure}
\caption{Convergence for domains \eqref{ex1}. Left: $\beta = \beta_c$. Right: $\beta = \beta_{nc}$.}
\label{fig:conv1}
\end{figure}

The results in \cref{fig:conv1} for the domains \eqref{ex1} strongly agree with the convergence rates expected from \cref{L^2-error} and \cref{H^1-error} for the relative errors in $B$ computed in the $L^2$- and $H^1$-norms, and with the rates for $\|(e_h,z_h)\|_s$ given in \cref{prop:conv_stab}. 

The numerical approximation improves when considering the setting in \eqref{ex2}, in which data is given both downstream and upstream, as reported in \cref{fig:conv2}.
The convergence is almost linear and the size of the errors is considerably reduced in the non-coercive case.

The resolution increases all the more when data is given near a big part of the boundary $\partial \Omega$, as for the computational domains \eqref{ex3} considered in \cref{fig:conv3}.
In this configuration of the set $\omega$, for both convective fields $\beta_{c}$ and $\beta_{nc}$, the $L^2$-errors decrease below $10^{-4}$ with superlinear rates on the same meshes considered in \cref{fig:conv1} and \cref{fig:conv2}.

\begin{figure}[h]
	\begin{subfigure}{0.48\textwidth}
		\includegraphics[draft=false, width=\textwidth]{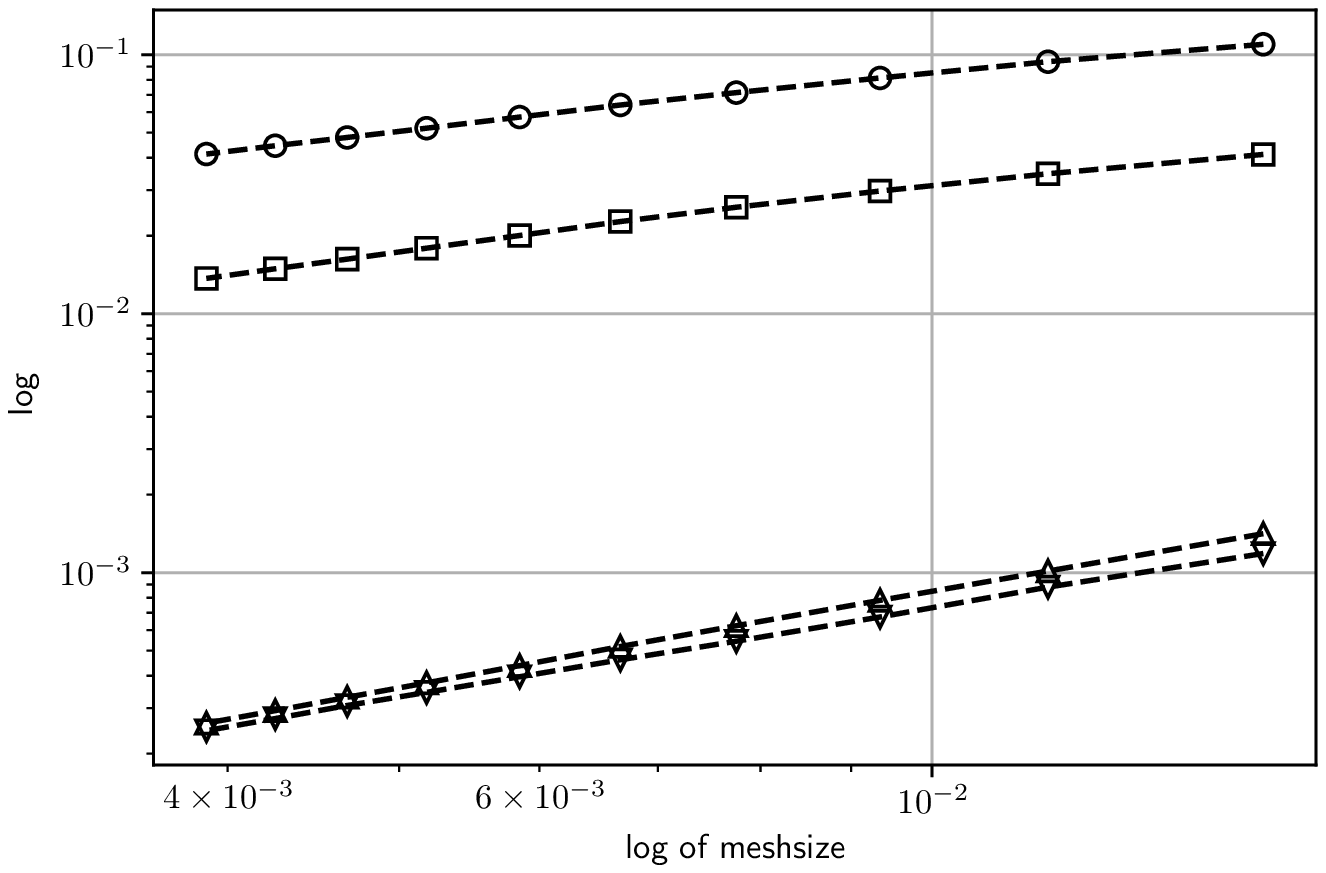}
		\caption{Circles: $H^1$-error, rate $\approx 0.8$; Squares: $L^2$-error, rate $\approx 0.94$; Up triangles: $s(e_h,e_h)^\frac12$, rate $\approx 1.24$; Down triangles: $s_*(z_h,z_h)^\frac12$, rate $\approx 1.2$.}
	\end{subfigure}
	\hfill
	\begin{subfigure}{0.48\textwidth}
		\includegraphics[draft=false, width=\textwidth]{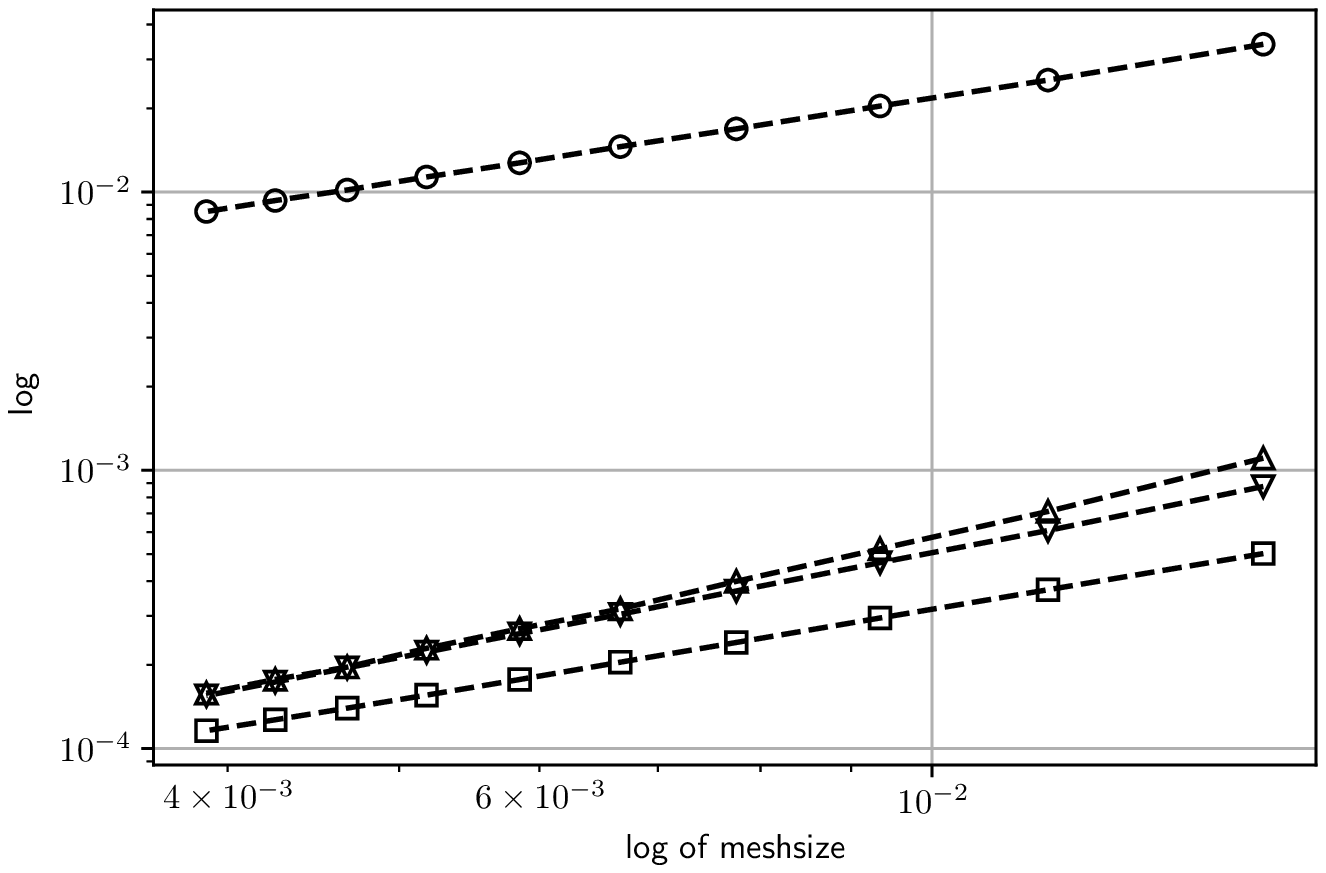}
		\caption{Circles: $H^1$-error, rate $\approx 1.02$; Squares: $L^2$-error, rate $\approx 1.07$; Up triangles: $s(e_h,e_h)^\frac12$, rate $\approx 1.3$; Down triangles: $s_*(z_h,z_h)^\frac12$, rate $\approx 1.25$.}
	\end{subfigure}
	\caption{Convergence for domains \eqref{ex2}. Left: $\beta = \beta_c$. Right: $\beta = \beta_{nc}$.}
	\label{fig:conv2}
\end{figure}

\begin{figure}[h]
	\begin{subfigure}{0.48\textwidth}
		\includegraphics[draft=false, width=\textwidth]{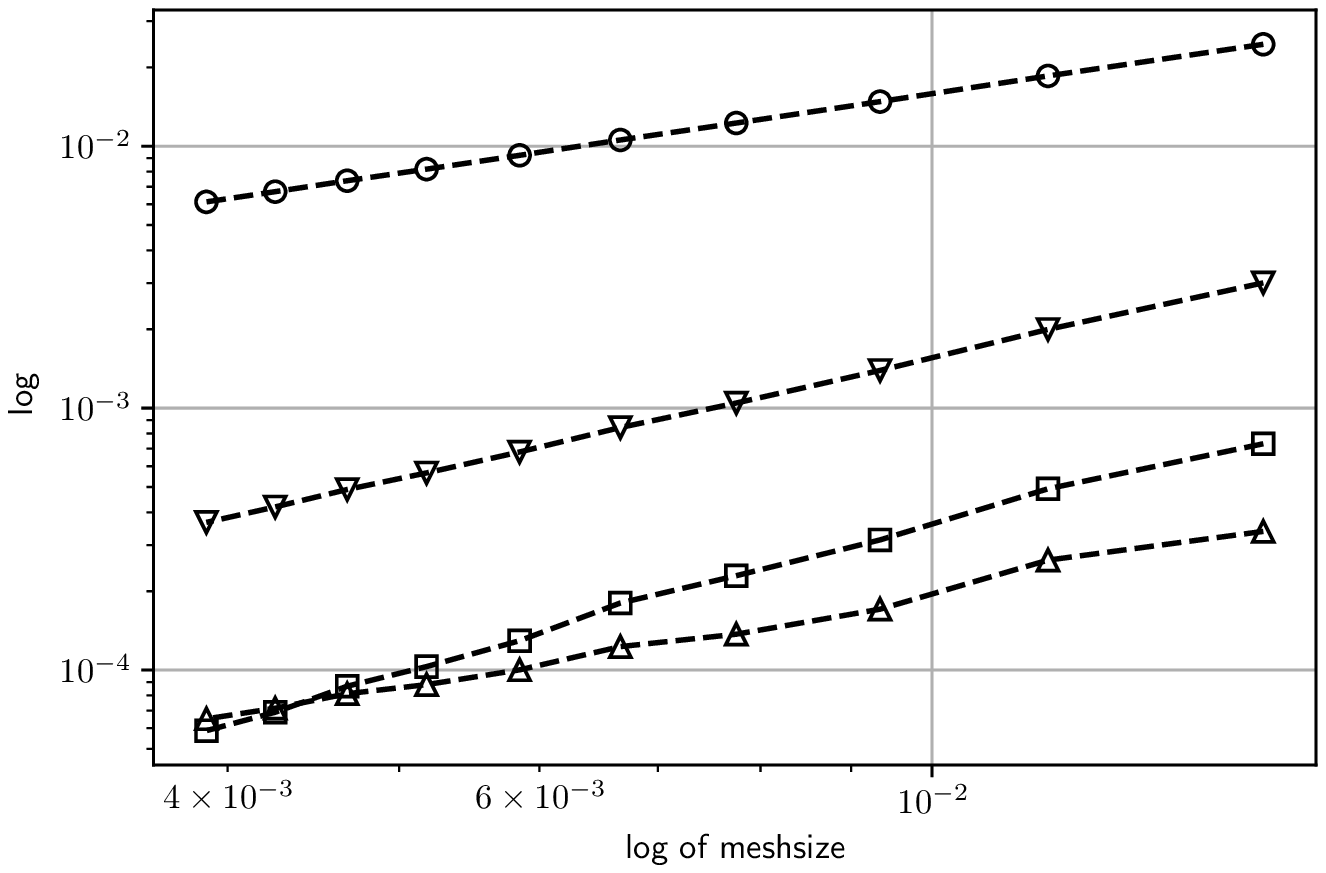}
		\caption{Circles: $H^1$-error, rate $\approx 1$; Squares: $L^2$-error, rate $\approx 1.81$; Up triangles: $s(e_h,e_h)^\frac12$, rate $\approx 1.04$; Down triangles: $s_*(z_h,z_h)^\frac12$, rate $\approx 1.52$.}
	\end{subfigure}
	\hfill
	\begin{subfigure}{0.48\textwidth}
		\includegraphics[draft=false, width=\textwidth]{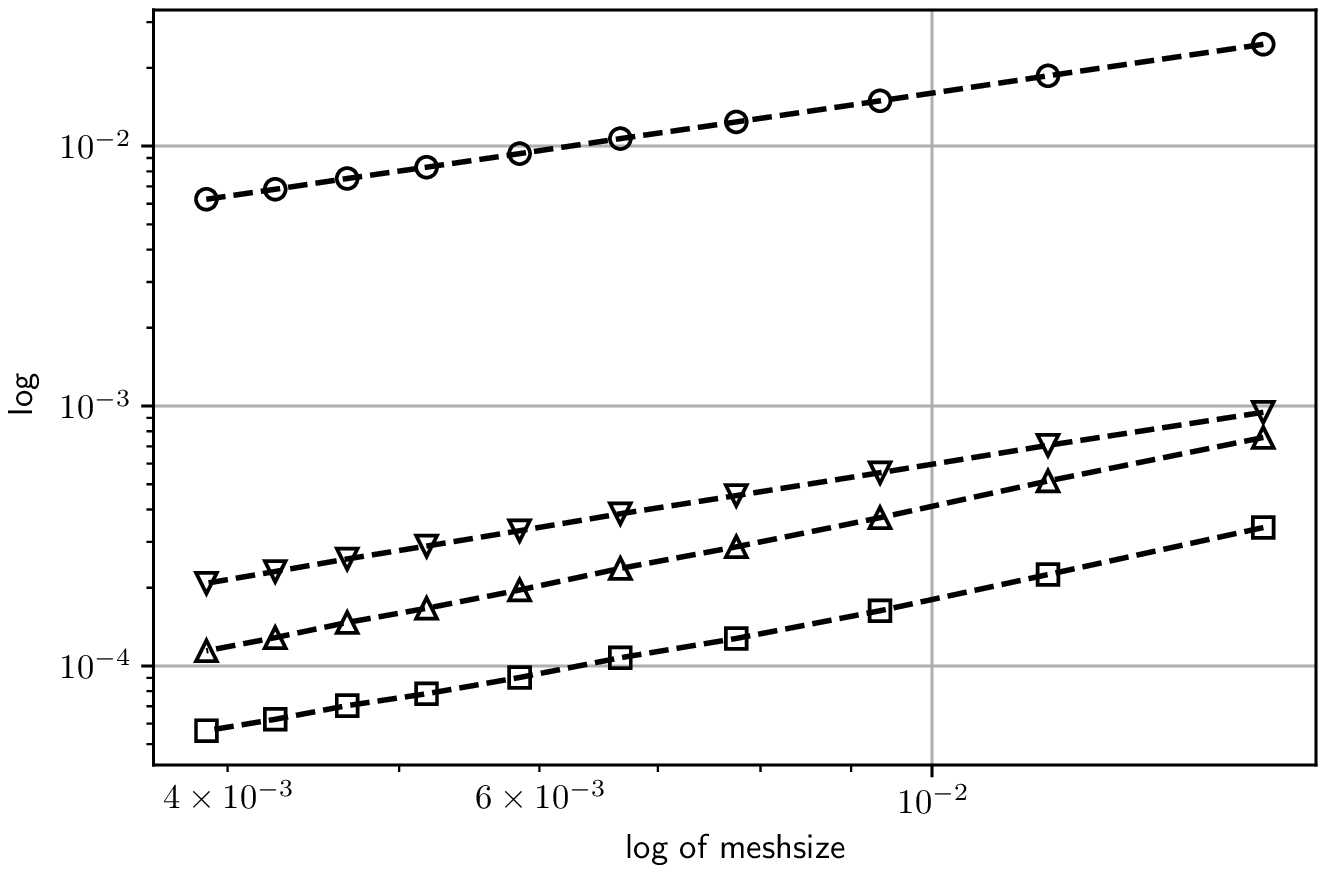}
		\caption{Circles: $H^1$-error, rate $\approx 1$; Squares: $L^2$-error, rate $\approx 1.13$; Up triangles: $s(e_h,e_h)^\frac12$, rate $\approx 1.30$; Down triangles: $s_*(z_h,z_h)^\frac12$, rate $\approx 1.16$.}
	\end{subfigure}
	\caption{Convergence for domains \eqref{ex3}. Left: $\beta = \beta_c$. Right: $\beta = \beta_{nc}$.}
	\label{fig:conv3}
\end{figure}

Comparing the geometries in \eqref{ex1} and \eqref{ex2} we also expect to see different effects of the two convective fields $\beta_{c}$ and $\beta_{nc}$. Notice that for both geometries the horizontal magnitude of $\beta_{nc}$ is greater than that of $\beta_{c}$. In \eqref{ex1} the solution is continued in the crosswind direction for both $\beta_{c}$ and $\beta_{nc}$, and a stronger convective field is not expected to improve the reconstruction. On the other side, in \eqref{ex2} information is propagated both downstream and upstream, and a stronger convective field can improve the resolution, despite the increase in the P\'eclet number. Indeed, we can see in \cref{fig:conv1} that for the geometry in \eqref{ex1} the numerical approximation is better for $\beta_c$ than for $\beta_{nc}$, while \cref{fig:conv2} shows better results for $\beta_{nc}$ than for $\beta_{c}$ in the case of \eqref{ex2}, especially for the $L^2$-error. 

\begin{figure}
	\begin{subfigure}{0.48\textwidth}
		\includegraphics[draft=false, width=\textwidth]{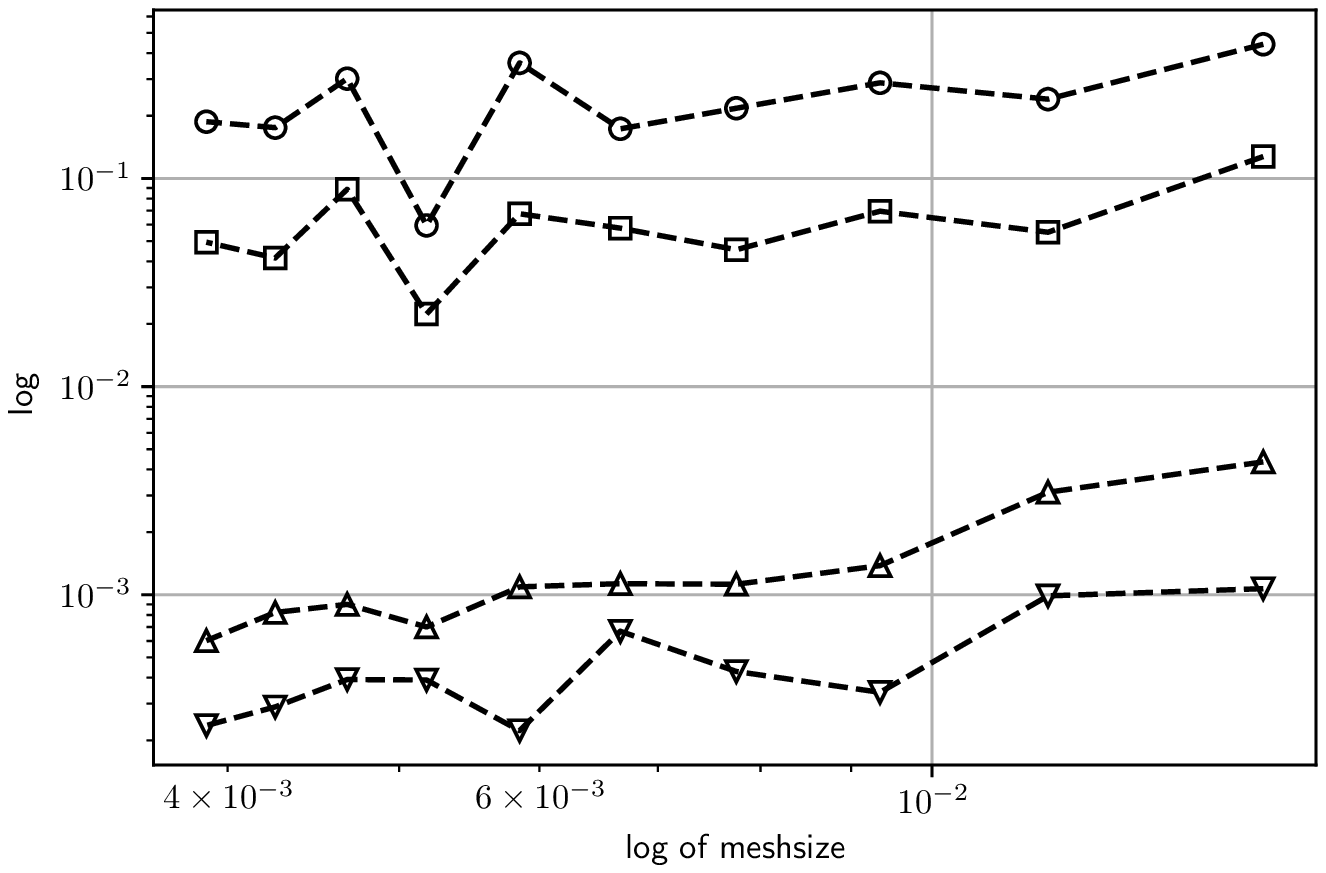}
		\caption{Noise amplitude $O(h^\frac12)$.}
	\end{subfigure}
	\hfill
	\begin{subfigure}{0.48\textwidth}
		\includegraphics[draft=false, width=\textwidth]{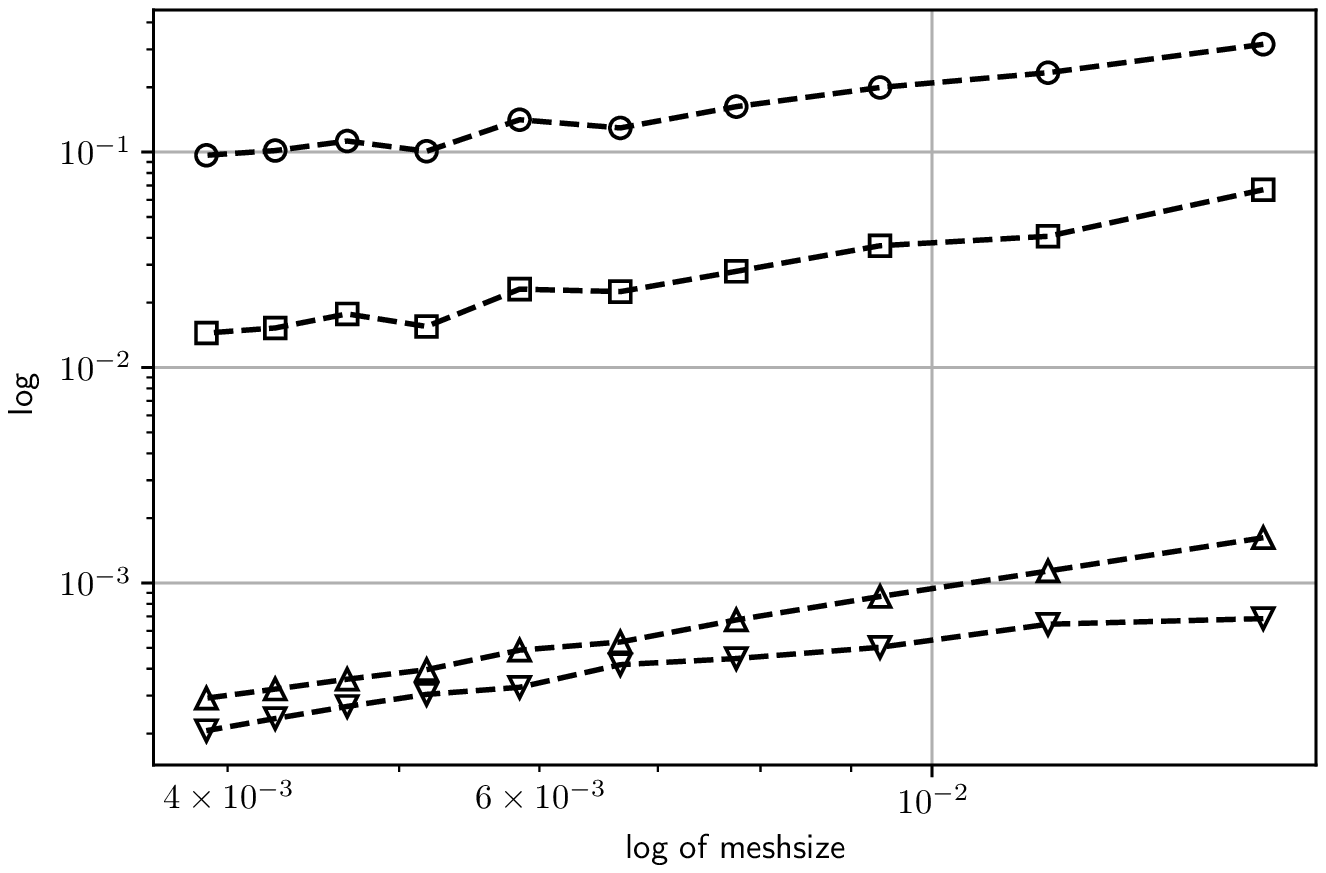}
		\caption{Noise amplitude $O(h)$.}
	\end{subfigure}
	\caption{Convergence for perturbed $\tilde U_\omega$ in domains \eqref{ex1}, $\beta = \beta_{c}$.}
	\label{fig:perturbation_c}
\end{figure}

To exemplify the noisy data $\tilde{U}_\omega = u\vert_{\omega} + \delta$, we perturb the restriction of $u$ to $\omega$ on every node of the mesh with uniformly distributed values in $[-h^\frac12, h^\frac12]$, respectively $[-h,h]$. Recall that by the error estimates in \cref{sec:FEM} the contribution of the perturbation $\delta$ is bounded by $h^{-1} \|\delta\|_\omega$. It can be seen in \cref{fig:perturbation_c} that the perturbations are strongly visible for an $O(h^\frac12)$ amplitude, but not for an $O(h)$ one.

\appendix
\section{}\label{appendix}
Denote by $(\cdot, \cdot)$, $|\cdot|$, $\div$, $\nabla$ and $D^2$ the inner product, norm,  divergence, gradient and Hessian in the Euclidean setting of $\Omega \subset \R^n$. We recall the following identity \cite[Lemma 1]{BNO19}.

\begin{lemma}
	\label{lem:carleman_eq}
	Let $\ell, w \in C^2(\Omega)$ and $\sigma \in C^1(\Omega)$.
	We define $v = e^\ell w$ and 
	\begin{equation*}
	a = \sigma - \Delta \ell, \quad 
	q = a + |\nabla \ell|^2, \quad 
	b = -\sigma v - 2(\grad v, \grad \ell), \quad
	B = (|\grad v|^2 - q v^2) \grad \ell.
	\end{equation*}
	Then
	\begin{align*}
	e^{2 \ell} (\Delta w)^2/2 
	&=
	(\Delta v + q v)^2/2 + b^2/2
	\\&	+ a |\nabla v|^2 + 2 D^2 \ell(\grad v, \grad v)
	+ \left(-a |\grad \ell|^2 + 2 D^2 \ell (\grad \ell, \grad \ell)\right)v^2
	\\&	+ \div(b \nabla v + B) + R,
	\end{align*}
	where 
	$R = (\grad \sigma , \grad v)v + \left(\div (a \grad \ell) - a\sigma \right) v^2.$
\end{lemma}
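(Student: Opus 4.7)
The identity is purely algebraic, so the plan is a direct pointwise verification by computing the conjugated Laplacian, squaring, and re-arranging the cross term using elementary divergence identities. First I would write $w = e^{-\ell} v$, apply the product rule twice, and verify the pointwise formula
\begin{equation*}
e^\ell \Delta w = \Delta v + q v + b,
\end{equation*}
using that $|\nabla\ell|^2 - \Delta\ell = q - \sigma$ and $-\sigma v - 2(\nabla v,\nabla\ell) = b$ by the definitions given. Squaring and dividing by $2$ immediately yields
\begin{equation*}
e^{2\ell}(\Delta w)^2/2 = (\Delta v + qv)^2/2 + (\Delta v + qv)\, b + b^2/2,
\end{equation*}
so the entire content of the lemma is the identification of the cross term $(\Delta v + qv)\, b$ with the remaining quantities on the right-hand side of the claimed identity.

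The bulk of the work is then to rewrite $(\Delta v + qv)\, b$ as a divergence plus the stated bulk terms plus $R$. I would treat the four pieces $-\sigma v\Delta v$, $-2\Delta v (\nabla v,\nabla\ell)$, $-\sigma q v^2$, and $-2qv(\nabla v,\nabla\ell)$ separately. The first is handled by $v\Delta v = \div(v\nabla v) - |\nabla v|^2$, producing $-\div(\sigma v\nabla v) + \sigma|\nabla v|^2 + (\nabla\sigma,\nabla v)v$; the last $(\nabla\sigma,\nabla v) v$ is precisely the first piece of $R$. The fourth is handled by noticing $2v\nabla v = \nabla(v^2)$, so that
\begin{equation*}
-2qv(\nabla v,\nabla\ell) = -\div(qv^2\nabla\ell) + v^2\,\div(q\nabla\ell).
\end{equation*}
The second, and key, piece $-2\Delta v(\nabla v,\nabla\ell)$ I would handle via the standard Bochner-type identity
\begin{equation*}
\Delta v(\nabla\ell,\nabla v) = \div\bigl((\nabla\ell,\nabla v)\nabla v - \tfrac12 |\nabla v|^2\nabla\ell\bigr) - D^2\ell(\nabla v,\nabla v) + \tfrac12\Delta\ell\,|\nabla v|^2,
\end{equation*}
which is obtained by expanding both divergences with the product rule and using symmetry of $D^2 v$. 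Multiplying by $-2$ produces the desired $2D^2\ell(\nabla v,\nabla v)$ and contributes $-\Delta\ell\,|\nabla v|^2$ to the bulk.

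Finally I would collect terms. Summing the $|\nabla v|^2$-contributions gives $(\sigma - \Delta\ell)|\nabla v|^2 = a|\nabla v|^2$. The $v^2$-contributions sum to $-\sigma q + \div(q\nabla\ell)$, and using $q = a + |\nabla\ell|^2$ together with the Bochner-type identity $\div(|\nabla\ell|^2\nabla\ell) = 2D^2\ell(\nabla\ell,\nabla\ell) + |\nabla\ell|^2\Delta\ell$ this becomes
\begin{equation*}
\bigl(\div(a\nabla\ell) - a\sigma\bigr) + \bigl(-a|\nabla\ell|^2 + 2D^2\ell(\nabla\ell,\nabla\ell)\bigr),
\end{equation*}
the first bracket being the remaining part of $R$ and the second the stated bulk $v^2$-coefficient. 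The divergence pieces assemble exactly into $\div(b\nabla v + B)$: the $-\div(\sigma v\nabla v) - 2\div((\nabla v,\nabla\ell)\nabla v)$ is $\div(b\nabla v)$, while the $+\div(|\nabla v|^2\nabla\ell) - \div(qv^2\nabla\ell)$ is $\div(B)$. The only mildly delicate step is the algebraic bookkeeping in the $v^2$-coefficient, where the cancellation $|\nabla\ell|^2\Delta\ell - \sigma|\nabla\ell|^2 = -a|\nabla\ell|^2$ must be spotted so that the remainder $R$ and the stated bulk term separate cleanly; everything else is routine product-rule computation.
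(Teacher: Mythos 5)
Your proof is correct and complete. Note that the paper itself gives no proof of this identity: it is simply recalled from \cite[Lemma 1]{BNO19}, so your direct pointwise verification is a self-contained substitute rather than a variant of an argument in the text. I checked the three load-bearing steps and they all hold: the conjugation formula $e^{\ell}\Delta w = \Delta v + qv + b$ follows from $e^{\ell}\Delta(e^{-\ell}v) = \Delta v - 2(\grad v,\grad\ell) + (|\grad\ell|^2-\Delta\ell)v$ together with $|\grad\ell|^2-\Delta\ell = q-\sigma$; the Rellich--Pohozaev-type identity for $\Delta v\,(\grad\ell,\grad v)$ is exactly what is needed to generate the $2D^2\ell(\grad v,\grad v)$ term and the $-\Delta\ell\,|\grad v|^2$ contribution (the $D^2v(\grad\ell,\grad v)$ terms cancel by symmetry of the Hessian, as you indicate); and the $v^2$-bookkeeping closes because $\div(|\grad\ell|^2\grad\ell) = 2D^2\ell(\grad\ell,\grad\ell) + |\grad\ell|^2\Delta\ell$ and $|\grad\ell|^2(\Delta\ell-\sigma) = -a|\grad\ell|^2$. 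The divergence terms assemble into $\div(b\grad v + B)$ exactly as you state. The only presentational caveat is that a referee would want the Bochner-type identity either derived in one displayed line or cited, since it carries the whole weight of the cross-term computation; everything else is routine.
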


\begin{proof}[Proof of \cref{prop:carleman}]
	Let $\ell = \tau \phi$ and let $\lambda>0$ such that $|D^2 \rho(X,X)| \le \lambda |X|^2$. Recalling that $\phi = e^{\alpha \rho}$ and using the product rule we have that
	$$
	D^2\phi(X,X) = \alpha \phi(\alpha (\nabla \rho, X)^2 + D^2 \rho(X,X)),
	$$
	hence
	$$
	D^2\phi(X,X) \ge \alpha \phi D^2 \rho(X,X) \ge - \alpha \lambda \phi |X|^2.
	$$
	Combining this with the previous equality, we obtain
	\begin{equation*}
	D^2 \phi(\nabla \phi, \nabla \phi)
	\ge \alpha^3 \phi^3(\alpha |\nabla \rho|^4 - \lambda |\nabla \rho|^2).
	\end{equation*}
	Choosing $\epsilon>0$ such that $\epsilon \le |\nabla \rho|^2 \le \epsilon^{-1}$ it holds
	$$
	D^2 \phi(\nabla \phi, \nabla \phi) \ge \alpha^3 \phi^3(\alpha \epsilon^2 - \lambda \epsilon^{-1}).
	$$
	Since
	$$
	2 D^2 \ell(\nabla v, \nabla v) \ge -2 \alpha \lambda \phi \tau |\nabla v|^2,
	$$
	by choosing $\sigma = \Delta \ell + 3 \alpha \lambda \phi \tau$, i.e. $a = 3 \alpha \lambda \phi \tau$ in \cref{lem:carleman_eq} we obtain the bounds
	\begin{align*}
	a |\nabla v|^2 + 2 D^2 \ell(\grad v, \grad v) &\ge \alpha \lambda \phi \tau |\nabla v|^2,\\
	(-a |\grad \ell|^2 + 2 D^2 \ell (\grad \ell, \grad \ell))v^2 &\ge (2 \alpha \epsilon ^2 - (3 + 2\lambda) \epsilon^{-1}) (\alpha \phi \tau)^3 v^2.
	\end{align*}
	We now bound
	$$
	(\grad \sigma , \grad v)v = (\grad (\Delta \ell), \grad v)v + 3\alpha \lambda (\grad \ell , \grad v)v
	\ge - \left( |\grad (\Delta \phi)|  + 3\alpha \lambda |\grad \phi| \right) \tau |\grad v| |v|
	$$
	and
	$$
	(\div(a\grad \ell) - a\sigma)v^2 = ((\grad a, \grad \ell) - a^2)v^2 \ge (3\alpha \lambda |\grad \phi|^2- 9 \alpha^2 \lambda^2 \phi^2) \tau^2 v^2.
	$$
	Combining these lower bounds with
	\begin{align*}
	\tau |\grad v| |v|
	\le \frac{1}{2} (|\grad v|^2 + \tau^2 |v|^2),
	\end{align*}
	and taking $\alpha$ large enough, we obtain from \cref{lem:carleman_eq} that
	\begin{equation}\label{eq:Carleman-explicit}
	C e^{2 \tau\phi} (\Delta w)^2 \ge (a_1 \tau^3 - a_2 \tau^2)v^2 + (b_1 \tau - b_0)|\grad v|^2 + \div(b \nabla v + B),
	\end{equation} 
	with $a_j,b_j > 0$ depending only on $\alpha$, $\phi$ and $\lambda$. Taking $\tau$ large enough and using the elementary inequality
	\begin{align*}
	|\grad v|^2 = e^{2\tau \phi} |\tau w \grad \phi + \grad w|^2
	\ge e^{2\tau \phi} \frac{1}{2} |\grad w|^2 - e^{2\tau \phi}|\grad \phi|^2 \tau^2 w^2,
	\end{align*}
	we conclude by integrating over $K$ and using the divergence theorem.
\end{proof}

\section{}\label{appendixB}
We briefly recall herein the definition of semiclassical pseudodifferential operators and semiclassical Sobolev spaces. We then discuss the composition rule of two such operators, which is also called symbol calculus, and some estimates that are used in the proof of \cref{lem:shifted3b}. This presentation is based on \cite[Chapter 4]{Zwo12} and \cite[Section 2]{LeRL12}, to which we refer the reader for more details.

We shall use the following standard notation. For $\xi\in \R^n$ we set $\langle \xi \rangle = (1+|\xi|^2)^{\frac12}$, and for a multi-index $\alpha=(\alpha_1,\ldots,\alpha_n)\in \N^n$ let $|\alpha|=\alpha_1+\ldots \alpha_n$, $\alpha! = \alpha_1! \cdots \alpha_n!$, $\xi^\alpha = \xi_1^{\alpha_1}\cdots \xi_n^{\alpha_n}$. Let also $\partial^\alpha = \partial_{x_1}^{\alpha_1}\cdots \partial_{x_n}^{\alpha_n}$, $D=\frac{1}{i} \partial$ and $D^\alpha = \frac{1}{i^{|\alpha|}} \partial^\alpha$. The Schwartz space $\mathcal{S}(\R^n)$ is the set of rapidly decreasing $C^\infty$ functions and its dual $\mathcal{S}'(\R^n)$ is the set of tempered distributions. The semiclassical parameter $\h$ is assumed to be small: $\h\in(0,\h_0)$ with $\h_0 \ll 1$.

The semiclassical Fourier transform is a rescaled version of the standard Fourier transform. It is given by
$$
\mathcal{F}_\h \varphi (\xi) := \int_{\R^n} e^{-\frac{i}{\h}x\cdot \xi} \varphi(x) \mathrm{d}x 
$$
and its inverse is
$$
\mathcal{F}^{-1}_\h \psi (x) := \frac{1}{(2\pi\h)^n} \int_{\R^n} e^{\frac{i}{\h}x\cdot \xi} \psi(\xi) \mathrm{d}\xi. 
$$
The following properties hold: $\mathcal{F}_\h((\h D_x)^\alpha \varphi) = \xi^\alpha \mathcal{F}_\h \varphi$ and $(\h D_\xi)^\alpha \mathcal{F}_\h \varphi (\xi) = \mathcal{F}_\h ((-x)^\alpha \varphi)$.

\subsection{Symbol classes}
For $m\in \R$ the symbol class $S^m$ consists of functions $a(x,\xi,\h) \in C^\infty(\R^n \times \R^n)$ such that for all multi-indices $\alpha,\tilde{\alpha}\in \N^n$ there exists a constant $C_{\alpha,\tilde{\alpha}}>0$ uniform in $\h\in(0,\h_0)$ such that
$$|\partial_x^\alpha \partial_\xi^{\tilde{\alpha}} a(x,\xi,\h)| \le C_{\alpha,\tilde{\alpha}} \langle \xi \rangle^{m-|\tilde{\alpha}|}, \quad x\in \R^n,\, \xi\in \R^n.$$
Symbols in $S^m$ thus behave roughly as polynomials of degree $m$. We write that $a\in \h^N S^{m}$ if
$$|\partial_x^\alpha \partial_\xi^{\tilde{\alpha}} a(x,\xi,\h)| \le C_{\alpha,\tilde{\alpha}} \h^N \langle \xi \rangle^{m-|\tilde{\alpha}|}, \quad x\in \R^n,\, \xi\in \R^n.$$

\begin{lemma-non}[Asymptotic series]
	Let $m\in \R$ and the symbols $a_j \in S^{m-j}$ for $j=0,1,\ldots$. Then there exists a symbol $a\in S^m$ such that $a\sim \sum\limits_{j=0}^{\infty} \h^j a_j$, that is for every $N\in \N$,
	$$a-\sum_{j=0}^{N} \h^j a_j \in \h^{N+1} S^{m-N-1}.$$
	%We will also write that $a-\sum_{j=0}^{N} h^j a_j = O(\h^{N+1}) $
	The symbol $a$ is unique up to $\h^\infty S^{-\infty}$, in the sense that the difference of two such symbols is in $\h^NS^{-M}$ for all $N,M\in \R$. The principal symbol of $a$ is given by $a_0$.
\end{lemma-non}
\subsection{Pseudodifferential operators}
Using these symbol classes we can define semiclassical pseudodifferential operators ($\psi$DOs).
For a symbol $a\in S^m$ we define the corresponding semiclassical $\psi$DO of order $m$, $Op(a):\mathcal{S}(\R^n)\to \mathcal{S}(\R^n)$,
$$Op(a)u(x) := \frac{1}{(2\pi\h)^n} \int_{\R^n} \int_{\R^n} e^{\frac{i}{\h}(x-y)\cdot \xi} a(x,\xi, \h)u(y) \mathrm{d}y \mathrm{d}\xi.$$
This is also called quantization of the symbol. $Op(a)$ can be extended to $\mathcal{S}'(\R^n)$ and $Op(a):\mathcal{S}'(\R^n)\to \mathcal{S}'(\R^n)$ continuously.
Note that $Op(a)u(x) = \mathcal{F}^{-1}_\h (a(x,\cdot)\mathcal{F}_\h u(\cdot))$ and that the operator corresponding to the symbol $a(x,\xi) = \sum_{|\alpha|\le N} a_\alpha(x) \xi^\alpha$ is $Op(a)u = \sum_{|\alpha|\le N} a_\alpha(x) (\h D)^\alpha u$. Notice that each derivative of this operator scales with $\h$.

For the present paper the most important example is the second order differential operator $A = -\h^2 \Delta + \h^2 \sum_{j=1}^{n} \beta_j(x) \partial_j$. Its symbol is given by $a(x,\xi,\h) = |\xi|^2 + i\h \sum_{j=1}^{n} \beta_j(x) \xi_j$, and its principal symbol is $a_0(x,\xi,\h) = |\xi|^2$.

\subsection{Semiclassical Sobolev spaces}\label{sec:sobolev}
For $s\in \R$ the semiclassical Sobolev spaces $H_\scl^s(\R^n)$ are algebraically equal to the standard Sobolev spaces $H^s(\R^n)$ but are endowed with different norms
$$
\norm{u}_{H_\scl^s(\R^n)} = \norm{J^s u}_{L^2(\R^n)},
$$
where the semiclassical Bessel potential is defined by $J^s = Op(\langle \xi \rangle^s)$. Informally,
$$
J^s = (1-\h^2 \Delta)^{s/2}, \quad s \in \R.
$$
For example, $\norm{u}_{H_\scl^1(\R^n)}^2 = \norm{u}_{L^2(\R^n)}^2 + \norm{\h \nabla u}_{L^2(\R^n)}^2$.
A semiclassical $\psi$DO of order $m$ is continuous from $H_\scl^{s}(\R^n)$ to $H_\scl^{s-m}(\R^n)$.

\subsection{Composition}
Composition of semiclassical $\psi$DOs can be analysed using the following calculus.
\begin{theorem-non}[Symbol calculus]\label{thm:composition}
	Let $a\in S^m$ and $b\in S^{m'}$. Then $Op(a)\circ Op(b) = Op(a\#b)$ for a certain $a\#b\in S^{m+m'}$ that admits the following asymptotic series
	\begin{equation*}
	a\#b(x,\xi,\h) \sim \sum_{\alpha} \frac{\h^{|\alpha|} i^{|\alpha|}}{\alpha!} D^\alpha_\xi a(x,\xi,\h) D^\alpha_x b(x,\xi,\h).
	\end{equation*}
\end{theorem-non}
The commutator and disjoint support estimates \eqref{commutator} and \eqref{pseudolocality} follow, respectively, from the following.
\begin{corollary-non}[Commutator and disjoint support]
	Let $a\in S^m$ and $b\in S^{m'}$. Then
	\begin{enumerate}[(i)]
		\item $a\#b - b\#a \in \h S^{m+m'-1}.$
		\item If $\supp(a)\cap \supp(b) = \emptyset$, then $a\#b\in \h^{\infty} S^{-\infty}$, i.e. $a\#b \in \h^NS^{-M}$ for all $N,M\in \R$.
	\end{enumerate}
\begin{proof}
	(i) The principal symbol of $a\#b$, that is the first term in its asymptotic series, is $ab$. The second term is $\frac{\h}{i} \sum_{j=1}^n \partial_{\xi_j}a(x,\xi,\h) \partial_{x_j}b(x,\xi,\h)$. We thus have that the principal symbol of the commutator $[Op(a),Op(b)] = Op(a\#b - b\#a)$ is given by
	\begin{equation*}%\label{eq:commutator_symbol}
	\frac{\h}{i} \sum_{j=1}^n (\partial_{\xi_j}a \partial_{x_j}b - \partial_{x_j}a \partial_{\xi_j}b) \in \h S^{m+m'-1}. 
	\end{equation*}
	(ii) If $\supp(a)\cap \supp(b) = \emptyset$, then each term in the asymptotic series of $a\#b$ vanishes.
\end{proof}
\end{corollary-non}

%\subsection{Commutator and pseudolocal estimates} Let $A, B$ be two semiclassical pseudodifferential 
%operators of orders $s$ and $m$. Then using the composition symbol calculus and the definition of semiclassical Sobolev spaces, we have that for all $p,q\in \R$, there is $C>0$,
%$$
%\norm{[A,B] u}_{H_\scl^{p}(\R^n)} \le C \h \norm{u}_{H_\scl^{p+s+m-1}(\R^n)}.
%$$
%If their symbols have disjoint support, then for all $p,q,N\in \R$, there is $C>0$
%$$
%\norm{AB u}_{H_\scl^{p}(\R^n)} \le C \h^N \norm{u}_{H_\scl^{q}(\R^n)}.
%$$

%\textbf{Acknowledgements.}
\section*{Acknowledgements}
E.B. was supported by EPSRC grants EP/P01576X/1 and EP/P012434/1, and L.O. by EPSRC grants EP/P01593X/1 and EP/R002207/1.

\bibliographystyle{alpha} 
\bibliography{biblio_part1}

\end{document}